\newcommand{\References}{references}
\pgfplotsset{compat=1.18} 
\def\BibTeX{{\rm B\kern-.05em{\sc i\kern-.025em b}\kern-.08em
    T\kern-.1667em\lower.7ex\hbox{E}\kern-.125emX}}
\newtheorem{definition}{Definition}[section]
\theoremstyle{definition}
\newtheorem{remark}[definition]{Remark}
\theoremstyle{plain}
\newtheorem{proposition}[definition]{Proposition}
\newtheorem{theorem}[definition]{Theorem}
\newcommand\RedeclareMathOperator{%
  \@ifstar{\def\rmo@s{m}\rmo@redeclare}{\def\rmo@s{o}\rmo@redeclare}%
}
\newcommand\rmo@redeclare[2]{%
  \begingroup \escapechar\m@ne\xdef\@gtempa{{\string#1}}\endgroup
  \expandafter\@ifundefined\@gtempa
     {\@latex@error{\noexpand#1undefined}\@ehc}%
     \relax
  \expandafter\rmo@declmathop\rmo@s{#1}{#2}}
\newcommand\rmo@declmathop[3]{%
  \DeclareRobustCommand{#2}{\qopname\newmcodes@#1{#3}}%
}
\newenvironment{smallbmatrix}%
{\left[\begin{smallmatrix}}%
{\end{smallmatrix}\right]}%
\newcommand{\N}{\mathds{N}}
\newcommand{\R}{\mathds{R}}
\newcommand{\Rp}{\R_{\geq0}}
\newcommand{\Rpp}{\R_{>0}}
\newcommand{\dB}{\mathcal{B}}
\newcommand{\fa}{\ \forall \, }
\newcommand{\rbl}{\left (}
\newcommand{\rbr}{\right )}
\newcommand{\nl}{\left\|}
\newcommand{\nr}{\right\|}
\newcommand{\cbl}{\left\lbrace }
\newcommand{\cbr}{\right\rbrace }
\newcommand{\Norm}[2][ ]{\nl #2 \nr_{#1}}
\newcommand{\SNorm}[1]{\Norm[\infty]{#1}}
\newcommand{\setdef}[2]{\cbl\ #1\ \left|\, \vphantom{#1} #2\, \right.\cbr}
\newcommand{\Gl}{{\rm Gl}}
\newcommand{\cC}{\mathcal{C}}
\newcommand{\cK}{\mathcal{K}}
\newcommand{\cF}{\mathcal{F}}
\newcommand{\cG}{\mathcal{G}}
\newcommand{\ModMax}{\bar u, \bar \gamma}
\newcommand{\cM}{\mathcal{M}_{\ModMax}}
\newcommand{\cN}{\mathcal{N}}
\DeclareMathOperator*{\rf}{ref}
\DeclareMathOperator*{\esssup}{ess\,sup}
\renewcommand{\l}{\lambda}
\newcommand{\me}{\mathrm{e}}
\newcommand{\con}{\mathcal{C}}
\newcommand{\cL}{\mathcal{L}}
\RedeclareMathOperator*{\Im}{Im}
\RedeclareMathOperator*{\Re}{Re}
\renewcommand{\phi}{\varphi}
\newcommand{\vp}{\varphi}
\newcommand{\dd}[2][ ]{\tfrac{\text{\normalfont d}#1}{\text{\normalfont d}#2}}
\newcommand{\yM}{y_{\rm M}}
\newcommand{\yref}{y_{\rm ref}}
\newcommand{\uMPC}{u_{\rm FMPC}}
\newcommand{\umax}{\bar u}
\newtheorem{algo}{Algorithm}
\Crefname{definition}{Definition}{Definitions}
\Crefname{assumption}{Assumption}{Assumptions} 
\Crefname{lemma}{Lemma}{Lemmata}
\Crefname{remark}{Remark}{Remarks}
\Crefname{theorem}{Theorem}{Theorems}
\newcommand{\email}[1]{\protect\href{mailto:#1}{#1}}
\newenvironment{@abssec}[1]{%
     \if@twocolumn
       \section*{#1}%
     \else
       \vspace{.05in}\footnotesize
       \parindent .2in
         {\upshape\bfseries #1: }\ignorespaces 
     \fi}
     {\if@twocolumn\else\par\vspace{.1in}\fi}
\newenvironment{keywords}{\begin{@abssec}{Keywords}}{\end{@abssec}}
\begin{document}
\title{
Safe continual learning in model predictive control with prescribed bounds on the tracking error\footnote{We gratefully acknowledge funding by the German Research Foundation (DFG; project numbers 471539468 and 507037103).}
}

 \author{Lukas Lanza\thanks{{Institute of Mathematics}, {Technische Universit\"at Ilmenau}, {98693
     Ilmenau}, {Germany} (\email{lukas.lanza@tu-ilmenau.de}, \email{karl.worthmann@tu-ilmenau.de})}
    \and{Dario Dennst\"adt\thanks{{Institute of Mathematics}, {Universit\"at
    Paderborn}, {33098 Paderborn}, Germany (\email{dario.dennstaedt@uni-paderborn.de}, \email{thomas.berger@uni-paderborn.de})}}
    \and{Thomas Berger}\footnotemark[3]
    \and{Karl Worthmann}\footnotemark[2]
}

\maketitle
\begin{abstract}
    We develop a three-component Model Predictive Control (MPC) algorithm to achieve output-reference tracking with prescribed performance for continuous-time nonlinear systems. 
    One component is so-called funnel MPC, which achieves reference tracking with prescribed performance for the model output for suitable models.
    Recently, this MPC algorithm has been combined with a model-free reactive feedback controller (second component) to account for model-plant mismatches, bounded disturbances, and uncertainties.
    By construction, this two-component controller defines a robust funnel MPC algorithm.
    It achieves output-reference tracking within prescribed bounds on the 
    tracking error for {a class of} unknown nonlinear systems. 
    In this paper, we extend the robust funnel MPC by a machine learning component
    to adapt the underlying model to the system data and, thus, improve the contribution of MPC.
    We derive sufficient structural conditions to define a class of models for funnel MPC, and provide a characterization of suitable learning schemes.
    Since robust funnel MPC is inherently robust and the evolution of the tracking error in the prescribed performance funnel is 
    guaranteed, the additional learning component is able to perform the learning task online --~even without an initial model or offline training.
\end{abstract}

\begin{keywords}
    Model predictive control, robust control, funnel control, safe learning
\end{keywords}

\section{Introduction}
Model Predictive Control (MPC) is a well-established control technique for constrained linear as well as for nonlinear multi-input multi-output systems.
Using a model of the system to be controlled, the idea is to iteratively solve Optimal Control Problems~(OCPs) based on predictions 
about the system %
behavior on a finite-time horizon, see e.g., the textbook~\cite{grune2017nonlinear}, and the references therein.
MPC is nowadays widely used {in} various applications, see e.g.,~\cite{BadgQin21} and the references therein.

Since MPC heavily relies on the availability and accuracy of the underlying model of the actual system, some research effort has been made to compensate for model-plant mismatches and external disturbances.
One research branch focuses on the MPC algorithm itself and its robustification, see e.g.,~\cite{BergDenn23,Buja21,KohlSolo20,SunDai19,RakoDai22}, and the references therein, respectively. 
For instance, in~\cite{KohlSolo20}, a robust MPC framework was developed for nonlinear discrete-time systems. The uncertainties and disturbances are compensated by introducing tightening tubes around the input and output constraints to ensure robust satisfaction of the constraints. 
A similar tube-based technique was used in~\cite{RakoDai22}.
In~\cite{BergDenn23}, it was shown that combining funnel MPC, which achieves reference tracking within prescribed error margins, cf.~\cite{BergDenn21}, with an additional robust feedback controller leads to a control scheme, which achieves a tracking objective even in case of a severe model-plant mismatch. 

Due to the recent successes in the field of machine learning, there have
been attempts to utilize such techniques, especially reinforcement
learning (RL), to learn an optimal control policy and mimic the behavior of
(robust) MPC algorithms, see e.g.,~\cite{amos2018differentiable,10606056}.
In~\cite{hassanpour2024practically}, RL
methods have been applied to the example of a chemical reactor utilizing
computations from an industrial implemented model predictive
control.
A different research branch, however, focuses on the idea to achieve robust constraint satisfaction of the actual system via adaption of the underlying model, see e.g.,~\cite{Aswa13,BerbKoeh20,LoreCann19} and the references therein.
For instance, in~\cite{BerbKoeh20}, the notion of persistently exciting data, cf.~\cite{Will05,faulwasser2022behavioral}, is exploited to update the model and show initial and recursive feasibility of the resulting MPC scheme.
In~\cite{hewing2019cautious,maiworm2021online}, combining MPC with Gaussian process-based learning schemes was 
proposed to achieve predictive control with stability guarantees.
In~\cite{maiworm2021online}, a NARX model was incorporated in the learning scheme.
The proposed controller in~\cite{hewing2019cautious} is formulated to address satisfaction of chance constraints, and its functioning is demonstrated with an autonomous racing vehicle.
A similar approach was used in~\cite{matschek2023safe} to perform safe learning-based control in robotics.
The issue of malfunctioning of the machine learning scheme is considered in~\cite{zieger2022non}, where the underlying neural network is forced to stay close to a predefined nominal model, and hence the MPC scheme safely achieves the control objective.
Predictive safety filters are an MPC variant
which is also closely related to tube-based approaches and
allows the application of learning-based control techniques while guaranteeing compliance with constraints.
The core idea is that the predictive safety filter verifies a control input signal proposed by a learning algorithm against a model.
If the proposed control signal is deemed safe it is applied to the system, otherwise
it is modified as little as necessary to guarantee constraint satisfaction, see~e.g.,~\cite{Wabersich18,Wabersich21,Wabersich23}.
A comprehensive overview of the application of various safe learning methods in MPC can be found in~\cite{HewingWaber20} and the references therein.
Combining a data-based controller with a reactive feedback controller to safeguard the transient behavior is presented in~\cite{gottschalk2024reinforcement} for policy iteration, in~\cite{schmitz2023safe} for data-enabled predictive control (DeePC~\cite{coulson2019data}) based on the fundamental lemma by Willems and coauthors~\cite{willems2005note} for LTI systems, in~\cite{lanza2024_sampleddata} for Q-learning, and in~\cite{BoldLanzWoth2024_Koopman} for nonlinear systems using Koopman-based MPC.

A particular control task, which is considered in the present article, is output-reference tracking {with prescribed performance}.
This differs from achieving asymptotic tracking, i.e., ensuring that the tracking error converges to zero,  which is 
is the primary focus of most MPC-based output tracking approaches, see e.g.,~\cite{kohler2018nonlinear,limon2018nonlinear}.
Several adaptive model-free control techniques like \emph{funnel control}~\cite{IlchRyan02b,BergIlch21} or \emph{prescribed performance
control}~\cite{bechlioulis2008robust,bechlioulis2014low} have been developed to achieve a prescribed transient behavior of the system output.
Since those approaches do not use a model of the system, the respective controllers cannot ``plan ahead'' and, thus, only react to the most-recent measurement.
This often results in a rapidly changing control signal with peaks and overall high control values.
Inspired by these techniques, continuous-time
funnel MPC was introduced in~\cite{berger2019learningbased} and further developed in~\cite{BergDenn21,BergDenn22,BergDenn23b}.
Incorporating a model to overcome the shortcomings of the adaptive control approaches,
funnel MPC utilizes a particular choice of the stage cost.
By penalizing the distance of the tracking error to the prescribed error boundary via a ``funnel-like'' stage cost,
the funnel MPC scheme achieves output tracking for systems with global strict relative degree one and bounded-input bounded-state stable internal dynamics, where the input and output are of the same dimension.
Within this framework it was shown that the tracking objective is achieved and funnel MPC is 
initially and recursively feasible, without 
incorporating terminal conditions, and arbitrary choices of the prediction horizon.
Although not proven yet mathematically, numerical results indicate that this
control algorithm can successfully be applied to systems with higher relative
degree, cf.~\cite{BergDenn21,berger2019learningbased}, as well as
\Cref{Sec:Simulation_RelDeg2} in the present article. Extensions to arbitrary
relative degree based on different stage cost functions are discussed
in~\cite{BergDenn22,BergDenn23b} and initial and recursive feasibility is
proved. 
Besides funnel MPC, other works also utilize MPC to prescribe the closed-loop behavior in the context of output-reference tracking.
For discrete-time systems, constraint satisfaction is ensured by assuming
suitable stabilizability and detectability conditions and a sufficiently long
prediction horizon in~\cite{kohler2021constrained}.
In~\cite{kheradmandi2017prescribing}, an MPC algorithm is used to achieve output
tracking within given boundaries for chemical plants in the form  of a numerical
case study. 

Many MPC schemes assume access to the full state of the system to be controlled; however, this is not satisfied in general.
Therefore, in~\cite{BergDenn23}, a distinction between the system to be controlled and the model to be used in the MPC algorithm is made,
and only access to the model's state and to output measurements (but no state measurements) of the real system is used to initialize the model.
In the present article we make use of that distinction, too, see \Cref{Sec:SystemModelClass}.
A different approach is pursed in~\cite{mayne2009robust,kogel2017robust} with uncertain/disturbed linear discrete-time systems under consideration, where the state is estimated using a Luenberger observer.
Combining the observer structure with a tube-based MPC scheme, robust constraint satisfaction and feasibility of the control algorithm were shown.

Along the lines of the second research branch discussed above, namely to update the model using measurement data from the system,
we build on the results of~\cite{BergDenn23}, where a two component controller was used, consisting of funnel MPC in combination with a model-free funnel control feedback loop.
We extend this approach by introducing a general online learning framework in order to continually 
improve the model utilizing the past system data, meaning system output, past model-based predictions,
and applied control signals, both from the model-based and the model-free controller component.
This not only allows to learn and fine-tune parameters of an already detailed model, 
but it is even possible to learn an unknown system without an initially given model. 
Continually improving the model and thereby the prediction capability required in MPC, 
the predictive funnel MPC control signal achieves the control objective for the unknown plant.

Summarizing the above, the main contribution of the present work is the model predictive control \Cref{Algo:LearningRFMPC}.
This algorithm satisfies the control objective introduced in \Cref{Sec:Problem}, i.e., guaranteeing satisfaction of output constraints while safely updating the model using data samples from the plant.
To establish \Cref{Algo:LearningRFMPC},~we
\begin{itemize}
    \item provide sufficient structural conditions to determine a class of feasible models of relative degree one for funnel MPC, cf. \Cref{Def:Model},
    \item introduce structural properties to be satisfied by a learning scheme to update the model for MPC, cf. \Cref{Def:Learning},
    \item prove initial and recursive feasibility of \Cref{Algo:LearningRFMPC} as the main result.
\end{itemize}

The present work is organized as follows.
In \Cref{Sec:Problem}, we provide 
the problem formulation. The control objective, as well as the controller components are introduced.
In \Cref{Sec:SystemModelClass}, we formally introduce the system class and the model class under study.
Anticipating the later considerations, we highlight that the actual system to be controlled, and the model of that system can be
quite different. E.g., it is possible to have a nonlinear system and a linear surrogate model on which the MPC algorithm operates.
\Cref{Sec:Learning} contains the main result of the article.
We introduce the combined three-component controller, establish the corresponding control \Cref{Algo:LearningRFMPC}, and formulate the main result \Cref{Thm:learningFRMPC}, 
which yields initial and recursive feasibility of the proposed control algorithm.
Furthermore, we present a particular learning scheme, and rigorously prove its feasibility.
In the last \Cref{Sec:Simulation}, we illustrate the control \Cref{Algo:LearningRFMPC} with two numerical simulations. \\

\noindent \textbf{Nomenclature}:
$\dB_\eta:=\{x\in\R^n| \Norm{x}\leq \eta\}$ is the closed ball with radius $\eta>0$ around the origin in $\R^n$. 
$\Gl_n(\R)$ is the group of invertible $\R^{n\times n}$ matrices.
For an interval $I\subseteq\R$, $L^\infty(I, \R^{n})$ is the Lebesgue space of
measurable, essentially bounded functions $f\colon I\to\R^n$ with norm $\|f \|_{\infty} =
\esssup_{t \in I} \|f(t)\|$, and
$W^{k,\infty}(I,  \R^{n})$ is the Sobolev space of all functions $f:I\to\R^n$ with $k$-th order weak derivative $f^{(k)}$ and $f,f^{(1)},\ldots,f^{(k)}\in
L^\infty(I, \R^{n})$; and $f|_J$ denotes the restriction of a function $f: I \to \R^n$ to the interval $J \subseteq I$.

\section{Problem formulation} \label{Sec:Problem}
The overall task is output-reference tracking within prescribed bounds on the tracking error. %
To this end, we apply a model predictive controller %
to achieve a superior controller performance while maintaining input and output constraints. However, since model-plant mismatches are unavoidable, we safeguard the MPC component by an additional feedback controller
to guarantee satisfaction of the output-tracking criterion; the funnel controller is a suitable choice for the latter task.
The third component, besides MPC and funnel control, is a learning one, which repeatedly updates the model to reduce the model-plant mismatch and, thus, improves the overall controller performance. 
A major challenge is to ensure proper functioning of the interplay of these three components, which requires a consistency condition for the model updates (see \Cref{Sec:SystemModelClass})~--~the key novelty of our approach in comparison to the recently proposed robust funnel MPC (which combines the first two components) in~\cite{BergDenn23}.
In the present work we establish the general underlying idea of combining these three components. We show how a learning scheme is to be integrated into the control algorithm while we do not specify a particular learning methodology or architecture. 
Nevertheless, we will discuss a variant of parameter identification as one possible learning instance in \Cref{Sec:Learning-scheme}.
\\

\noindent \textbf{The control objective} is output reference tracking within prescribed transient error bounds. 
This means that, for an unknown multi-input multi-output control system of the form
\begin{equation}\label{eq:Sys_Orig}
    \dot{z}(t)   =   F(z(t),d(t))  +  G(z(t),d(t))u(t) , \ \,  y(t)  =  H(z(t)),
\end{equation}
where $z(t) \in \R^l$, bounded disturbance~$d \in L^\infty(\Rp,\R^a)$, and $y(t)\in \R^m$ for $l,a,m \in \N$,
we seek an input~$u(t) \in \R^m$ such that the output~$y$ tracks a given reference signal~$\yref: \Rp \to \R^m$.
We note that the above class covers both single-input single-output and multi-input multi-output systems, where we require the dimension of the input and the output to be the same.
Specific properties of the system parameters $F,G,H$ are introduced in \Cref{Sec:SystemModelClass}.
Moreover, the tracking task is asked to be satisfied with a given precision, i.e., the tracking error $e(t) := y(t) - \yref(t)$ should evolve within 
(possibly time-varying) boundaries given by a so-called funnel function~$\psi$, prescribed by the user.
To be precise, the control objective is that
\begin{equation} \label{eq:ControlObjective}
 \forall \, t \ge 0 \, : \  \| y(t) - y_{\rm ref}(t)\| < \psi(t),   
\end{equation}
i.e.,
the tracking error shall evolve within the %
funnel
\[
    \cF_\psi:= \setdef{ (t,e)\in \Rp\times\R^{m} }{ \Norm{e} < \psi(t) },
\]
which is determined by the function $\psi$
belonging to the set
\[
    \cG =\setdef{ \psi\in W^{1,\infty}(\Rp,\R) }{\text{inf}_{t \ge 0}\, \psi(t)>0 },
\]
see also \Cref{Fig:funnel}.
\begin{figure}[h]
  \begin{center}
  {
    \begin{tikzpicture}[scale=0.35]
    \tikzset{>=latex}
    \filldraw[color=gray!25] plot[smooth] coordinates {(0.15,4.7)(0.7,2.9)(4,0.4)(6,1.5)(9.5,0.4)(10,0.333)(10.01,0.331)(10.041,0.3) (10.041,-0.3)(10.01,-0.331)(10,-0.333)(9.5,-0.4)(6,-1.5)(4,-0.4)(0.7,-2.9)(0.15,-4.7)};
    \draw[thick] plot[smooth] coordinates {(0.15,4.7)(0.7,2.9)(4,0.4)(6,1.5)(9.5,0.4)(10,0.333)(10.01,0.331)(10.041,0.3)};
    \draw[thick] plot[smooth] coordinates {(10.041,-0.3)(10.01,-0.331)(10,-0.333)(9.5,-0.4)(6,-1.5)(4,-0.4)(0.7,-2.9)(0.15,-4.7)};
    \draw[thick,fill=lightgray] (0,0) ellipse (0.4 and 5);
    \draw[thick] (0,0) ellipse (0.1 and 0.333);
    \draw[thick,fill=gray!25] (10.041,0) ellipse (0.1 and 0.333);
    \draw[thick] plot[smooth] coordinates {(0,2)(2,1.1)(4,-0.1)(6,-0.7)(9,0.25)(10,0.15)};
    \draw[thick,->] (-2,0)--(12,0) node[right,above]{\normalsize$t$};
    \draw[thick,dashed](0,0.333)--(10,0.333);
    \draw[thick,dashed](0,-0.333)--(10,-0.333);
    \node [black] at (0,2) {\textbullet};
    \draw[->,thick](4,-3)node[right]{\normalsize$\inf\limits_{t \ge 0} \psi(t)$}--(2.5,-0.4);
    \draw[->,thick](3,3)node[right]{\normalsize$(0,e(0))$}--(0.07,2.07);
    \draw[->,thick](9,3)node[right]{\normalsize$\psi(t)$}--(7,1.4);
    \end{tikzpicture}
  }
  \end{center}
 \vspace{-2mm} 
  \caption{\label{Fig:funnel}Evolution of the error $e$ in a funnel $\mathcal F_{\psi}$ with boundary~$\psi$.
  The figure is based on~\cite[Fig.~1]{BergLe18a}, edited for present purpose. 
  }
\end{figure}
Note that the function $\psi$ is not necessarily monotonically
decreasing and in certain situations, like in the presence of periodic
disturbances, widening the funnel over some time interval might be beneficial.
During safety critical system phases, the funnel diameter will be small, while during non-critical phases the funnel can be widened again to reduce the control effort. 
The funnel function~$\psi\in\cG$ is a design parameter and the specific
application usually dictates constraints on the tracking error and thus
indicates suitable choices.

\ \\
\noindent
\noindent\textbf{Funnel control.}
There exist several methods to achieve reference tracking with $(t,e(t)) \in \mathcal{F}_\psi$ for an unknown system~\eqref{eq:Sys_Orig} satisfying the structural assumptions in \Cref{Def:System-class} (cf. \Cref{Sec:SystemModelClass}). In this work, we will use so-called funnel control, first proposed in~\cite{IlchRyan02b} -- an adaptive high-gain feedback methodology.
For a given reference trajectory~$y_{\rm ref}$ and a funnel function~$\psi$ the instantaneous control input~$u_{\rm fb}$ is calculated via the feedback law
\begin{equation*}
    u_{\rm {fb}}(t,y(t)) = - \psi(t) \frac{y(t) - y_{\rm ref}(t)}{\psi(t)^2 - \|y(t)-y_{\rm ref}(t)\|^2}.
\end{equation*}
This feedback has the following effect:
If the tracking error~$e(t)$ is close to the funnel boundary~$\psi(t)$, then the denominator becomes small and the tracking error is being pushed away from the funnel boundary. 
This effect can be observed in the numerical simulation in \Cref{Fig:Chem_reac_learning_fmpc} in \Cref{Sec:Simulation}.
Further details how this feedback is incorporated in the proposed control method will be provided in \Cref{Sec:Learning}.
In the present work, we use funnel control, but acknowledge that for systems defined in \Cref{Def:System-class} the prescribed performance control methodology proposed in~\cite{bechlioulis2010prescribed,bechlioulis2014low} is equally suitable. 
Both methods are able to achieve the tracking objective.
Note that by $\psi\in\cG$ the tracking error is not forced to converge asymptotically to zero.

\ \\
\noindent \textbf{Robust funnel MPC.}\
To solve the described problem for the unknown system~\eqref{eq:Sys_Orig} relying on predictions of the system's behavior, 
\textit{robust funnel MPC} was proposed in~\cite{BergDenn23}.
Like its underlying funnel MPC scheme~\cite{BergDenn21}, robust funnel MPC is a continuous-time MPC scheme. It consists of two components, one model-based and one model-free, see \Cref{Fig:learning-RFMPC}.
\begin{figure*}[h]
\begin{center}
\resizebox{0.95\textwidth}{!}{
    \begin{tikzpicture}[very thick,%
    scale=0.58,%
    node distance = 9ex,
    box/.style={fill=white,rectangle, draw=black},
    blackdot/.style={inner sep = 0, minimum size=3pt,shape=circle,fill,draw=black},%
    blackdotsmall/.style={inner sep = 0, minimum size=0.1pt,shape=circle,fill,draw=black},%
    plus/.style={fill=white,circle,inner sep = 0,very thick,draw},%
    metabox/.style={inner sep = 3ex,rectangle,draw,dotted,fill=gray!20!white}]
    \begin{scope}[scale=0.5]
        \node (sys) [box,minimum size=9ex,xshift=-1ex, fill=orange!60]  {System};
        \node(FC) [box, below of = sys,yshift=-6ex,minimum size=9ex] {Funnel Controller};
        \node(fork1) [plus, right of = FC, xshift=18ex] {$+$};
        \node(fork2) [plus, left of = FC, xshift=-15ex] {$+$};
        \node(fork3) [blackdot, left of = fork2, xshift=-0ex] {};
        \node(MPC) [box, left of = fork3,xshift=-8ex,minimum size=9ex] {Funnel MPC};
        \node(MPCin1) [minimum size=0pt, inner sep = 0pt, below of = MPC, yshift=4.5ex, xshift=2ex] {};
        \node(MPCin1Desc) [minimum size=0pt, inner sep = 0pt, below of = MPCin1, yshift=5.5ex, xshift=2.5ex] {$y$};
        \node(MPCin2) [minimum size=0pt, inner sep = 0pt, below of = MPC, yshift=4.5ex, xshift=-2ex]{};
        \node(MPCin2Desc) [minimum size=0pt, inner sep = 0pt, below of = MPCin2, yshift=5.5ex, xshift=-2.5ex] {$y_{\rf}$};
        \node(refin) [minimum size=0pt, inner sep = 0pt, below of = MPC, yshift=-2ex, xshift=-2ex] {};
        \node(Mod) [box, above of = MPC,yshift=5ex,minimum size=9ex] {Model};
        \node(fork4) [blackdot, left of = MPC, xshift=-5ex] {};
        \node(fork5) [minimum size=0pt, inner sep = 0pt, below of = fork4, yshift=-3.5ex] {};
        \node(fork9) [blackdot, inner sep = 0pt, right of = fork1, xshift=-2ex ] {};
        \node(fork6) [minimum size=0pt, inner sep = 0pt, below of = fork9, yshift=-2ex] {};
        \node(fork11)[blackdot, inner sep = 0pt, above of = fork9, yshift=6ex ] {};
        \node(fork12)[blackdot, inner sep = 0pt, above of = fork3, yshift=5ex ] {};
        \node(fork13)[blackdot, inner sep = 0pt, above of = fork4, yshift=5ex ] {};
        \node(ML) [box,above of = Mod,minimum size=9ex,xshift=15ex,yshift=8ex] %
            {$\begin{array}{c} \text{Machine}\\ \hspace*{0.55cm}\text{learning}\hspace*{0.55cm} \end{array}$};
        \node(MLu) [minimum size=0pt, inner sep = 0pt, right of = ML, yshift=2ex,xshift=0.6ex] {};
        \node(MLo) [minimum size=0pt, inner sep = 0pt, right of = ML, yshift=-2ex,xshift=0.6ex] {};
        \node(fork8) [minimum size=0pt, inner sep = 0pt, above of = fork1, yshift=-1ex] {};

        \node(fork7) [minimum size=0pt, inner sep = 0pt, below of = MPCin1, yshift=2.5ex] {};
        \draw[-] (MPC) -- (fork3) node[pos=0.2,above, xshift=2ex] {$u_{\rm FMPC}$};
        \draw[->] (refin) -- (MPCin2) node[pos=0.4,left] {};
        \draw[-] (fork3) |- (fork12);
        \draw[->] (fork12) |- (Mod);
        \node(ML_uFMPC)[below of = ML, yshift=5.3ex, xshift=2ex] {};
        \node(ML_yM1)[above of = ML, xshift=2ex] {};
        \coordinate[above of = ML_uFMPC, yshift=3ex] (ML_yM1) ;
        \node(ML_yM2)[above of = ML,yshift=-24, xshift=2ex] {};
        \draw[->] (fork12) -| (ML_uFMPC) node[pos=0.9,right] {$u_{\rm FMPC}$};
        \draw[-] (fork13) |- (ML_yM1.east) ;
        \draw[-] (fork13) -- (fork4) ;
        \draw[->] (ML_yM1.north) -| (ML_yM2) node[pos=0.7,right] {$y_{\rm M}$};
        \draw (Mod) -| (fork13) node[pos=0.2,above] {$y_{\rm M}$}; 
        \draw[->] (fork4) -- (MPC);
        \draw[->] (fork3) -- (fork2.west);
        \draw[-, name path=line4] (sys) -| (fork9)node[pos=0.05,above] {$y$} ;
        \draw[->] (fork9) -- (fork1) node[pos=0.6,right, above] {$+$};
        \draw[-] (fork9) -- (fork6.south);
        \draw[->,name path=line2] (fork5.west) -| (fork1) node[pos=0.9,left] {$-$};
        \draw[->] (fork7.south) -- (MPCin1);
        \path[->,name path=line1] (fork7.west) -- (fork6.east){};
        \path [name intersections={of = line1 and line2}];
        \path [name intersections={of = line1 and line2}];
        \coordinate (S)  at (intersection-1);          
        \path[name path=circle] (S) circle(5.mm);
        \path [name intersections={of = circle and line1}];
        \coordinate (I1)  at (intersection-1);
        \coordinate (I2)  at (intersection-2);
        \tkzDrawArc[color=black, very thick](S,I2)(I1);
        \draw[-] (fork6.east) -- (I2);
        \draw[-] (fork7) |- (I1);
        \draw[-] (fork11) |- (fork9);
        \draw[->] (fork11) |- (MLu) node[pos=0.975,above] {$y$};;
        \draw[->] (fork1) -- (FC) node[midway,above] {$y - y_{\rm M}$};
        \draw[->] (FC) -- (fork2) node[pos=0.4,above, xshift=2ex] {$u_{\rm FC}$};
        \draw[->] (fork2) |- (sys) node[pos=0.73,above] {$u=u_{\rm FMPC} + u_{\rm FC}$};
        \draw (fork4) -- (fork5.south);
        \node(fork10) [blackdot, right of = fork2, xshift=-1ex] {};
        \draw (fork10) |- (fork8.east);
        \draw[->] (ML) -| (Mod) node[pos=0.6,left] {$\begin{array}{l} \text{Model}\\ \text{update}\end{array}$};
        \path[->,name path=line5] (fork8.north) |- (MLo)  node[pos=0.96,below] {$u_{\rm FC}$};
        \path [name intersections={of = line5 and line4}];
        \coordinate (S3)  at (intersection-1);         
        \path[name path=circle3] (S3) circle(5.mm);
        \path[name path=circle3] (S3) circle(5.mm);
        \path [name intersections={of = circle3 and line5}];
        \coordinate (I5)  at (intersection-1);
        \coordinate (I6)  at (intersection-2);
        \tkzDrawArc[color=black, very thick](S3,I5)(I6);
        \draw[-] (fork8.north) -- (I6);
        \draw[->] (I5) |- (MLo);
    \end{scope}
    \begin{pgfonlayer}{background}
        \fill[gray!25] (-18,-8.0) rectangle (9.2,1.8);
        \fill[red!20] (-17.5,-6.4) rectangle (-8,1.5);
        \fill[blue!20] (-5.5,-6.4) rectangle (8,-2.);
        \fill[green!20] (-18.0,2) rectangle (-5,7);
        \node at (-12.5,-8.7) {\color{red!75}{Model-based controller component}};
        \node at (1.5,-8.7) {\color{blue!75}{Model-free controller component}};
        \node at (-1,5.35) {\color{green}{Learning component}};
    \end{pgfonlayer}
\end{tikzpicture}}
  \caption{Design of the proposed three-component controller. The gray box (containing the red (funnel MPC) and the blue (funnel control) structures) represents the two-component controller \emph{robust funnel MPC} proposed in~\cite{BergDenn23}. The green box represents the learning component, which receives the four signals: system output~$y$, model output~$\yM$, funnel MPC control signal~$u_{\rm FMPC}$, and funnel control signal~$u_{\rm FC}$.
  Structural requirements on the learning scheme are introduced in \Cref{Def:Model,Def:Learning}.
  }
  \label{Fig:learning-RFMPC}
  \end{center}
\end{figure*}
The model-based funnel MPC component (red box in \Cref{Fig:learning-RFMPC}) uses  a model of the form
\begin{equation}\label{eq:Mod_Orig}
\begin{aligned}
    \dot{x}(t)  &=  f(x(t))+g(x(t))u(t), \quad   y_{\rm M}(t)  = h(x(t))
\end{aligned}
\end{equation}
where $x(t) \in \R^n$ and $y_{\rm M}(t), u(t) \in \R^m$,
as an approximation of the system~\eqref{eq:Sys_Orig}.
Specific properties of the model parameters $f,g,h$ are introduced in \Cref{Sec:SystemModelClass}.
At time instances $t_k\in \delta\N_0$ with $\delta>0$, the current output~$\yM(t_k)$ of the model~\eqref{eq:Mod_Orig} is measured and 
predictions of the future model behavior are computed over the next time interval of length {$T = N \delta > 0$, $N \in \mathbb{N}$}. Using the {time-varying} \textit{stage cost} 
{$\ell:\Rp\times\R^m\times\R^{m}\to \R_{\geq 0} \cup\{\infty\}$} where $\ell(t,y_{\rm M},u)$ is defined by
\begin{align} \label{eq:stageCostFunnelMPC}
\ell(t,\yM,u) =
    \begin{dcases}
        \frac {\Norm{y_{\rm M}-y_{\rf}(t)}^2}{\psi(t)^2 - \Norm{y_{\rm M}-y_{\rf}(t)}^2} 
        + \l_u \Norm{u}^2,
            & \Norm{y_{\rm M}-y_{\rf}(t)} \neq \psi(t)\\
        \infty,&\text{else},
    \end{dcases}
\end{align}
with regularization parameter~$\lambda_u \ge 0$, a control signal $u_{\rm FMPC}\in L^{\infty}([t_k,t_k+T],\R^m)$
is computed as a solution of a finite horizon optimal control problem.
Additionally, the corresponding predicted output
$y_{\rm M}(\cdot;t_k, x_k, u_{\rm FMPC}) = {h(x(\cdot;t_k, x_k, u_{\rm FMPC}))}\in L^{\infty}([t_k,t_k+T],\R^m)$ is calculated.
Here  $x(\cdot; t_k, {x_k}, u)$ denotes the unique  
solution of~\eqref{eq:Mod_Orig} with initial condition~$x(t_k)=x_k$, which is well-defined for appropriate $f,g,h$ (the model is specified in \Cref{Def:Model})
on the interval $[t_k,t_k+T]$ for appropriate control input~$u$.

The model-free funnel control component (blue box in \Cref{Fig:learning-RFMPC}) computes an 
instantaneous control signal~$u_{\rm FC}$ based on the deviation between the output~$y$ of system~\eqref{eq:Sys_Orig} and the funnel MPC-based predicted~$y_{\rm M}$. 
The sum {$u(t) = u_{\rm FMPC}(t) + u_{\rm FC}(t)$} is then 
applied to the actual system~\eqref{eq:Sys_Orig} {at time instant~$t$}. 
The signal~$u_{\rm FC}$ from the funnel controller is used 
to compensate for occurring disturbances, uncertainties in the model~\eqref{eq:Mod_Orig} and unmodeled dynamics.
It is solely determined by the instantaneous values of the system output~$y$, the funnel function~$\psi$, 
and the prediction~$y_{\rm M}$. Therefore, the model-free component cannot \textit{plan ahead}.
This may result 
in large control values and a rapidly changing control signal if the actual output significantly deviates from its predicted counterpart.
A significant deviation is here to be understood relative to the current size of the funnel given by~$\psi$.
Numerical simulations in~\cite{BergDenn21,berger2019learningbased} show that funnel MPC exhibits a considerably 
better controller performance than pure funnel control.
In order to reduce the {control effort} of the funnel controller, a continuous \emph{activation function} $\beta$ is incorporated and has the following effect: whenever the deviation between predicted~$\yM$
and plant output~$y$ is acceptable (designer's choice), only $\uMPC$ is applied.
A reasonable and simple choice for~$\beta$ is a ReLU-like map, which is zero below a given threshold and linear above.\\

\noindent \textbf{Learning the model} is the third component of the overall task addressed in the present article.
Since funnel MPC exhibits better controller performance, and robust funnel MPC is able to compensate for model-plant mismatches, it is desirable to improve the model {so that, preferably,} the control $u_{\rm FMPC}$ is sufficient to achieve the tracking task {with prescribed performance} for the unknown system while satisfying the input constraints~-- in other words, it is desirable that the funnel controller is inactive.
In \Cref{Def:Model} we identify and establish properties of the learning component such that learning and updating the model preserves the structure necessary for robust funnel MPC~\cite{BergDenn23}.
We emphasize that in the present work we do not focus on a particular learning scheme but establish some structural properties of a learning scheme suitable to be combined with robust funnel MPC.
In \Cref{Sec:Learning-scheme} we discuss a variant of parameter identification as one possible instance of a learning scheme; however, we emphasize that the presented methodology is not restricted to this scheme.
As a result, the robustness w.r.t.
model-plant mismatches of robust funnel MPC even allows to start with ``no model'', e.g., only an integrator chain, and then learn the remaining drift-dynamics.
This 
is considered in a numerical simulation in \Cref{Sec:Simulation}.

\section{System class and model class} \label{Sec:SystemModelClass}
In this section we formally introduce the class of systems to be controlled as well as the class of models to be used in the 
MPC part.
We consider nonlinear multi-input multi-output systems.
Since in our later analysis we mainly refer to the results of~\cite{BergDenn23,BergDenn21}, where the system is considered in so-called Byrnes-Isidori normal form, we assume that the system~\eqref{eq:Sys_Orig} can be transformed (by a transformation which does not need  to be known) into input-output normal form
\begin{subequations} \label{eq:Sys}
    \begin{align}
        \dot y(t) & =  P(y(t),\zeta(t),d(t))  +  \Gamma(y(t),\zeta(t),d(t)) u(t), & y(0)  =  y^0  , \\
        \dot \zeta(t) & =  Q(y(t),\zeta(t),d(t)), & \zeta(0)  =  \zeta^0  ,\label{eq:ZeroDyn}
    \end{align}
\end{subequations}
with control input $u:\Rp\to\R^m$, a bounded internal or external disturbance 
$d : \Rp \to \R^a$, 
and output $y:\Rp\to\R^m$.
The system dynamics are governed by \emph{unknown} nonlinear functions $P \in \cC(\R^m \times \R^{\kappa} \times \R^{a}, \R^m)$ and $\Gamma \in \cC(\R^m \times \R^\kappa \times \R^{a}, \R^{m \times m})$, where the latter is the so-called high-gain matrix. The last equation in~\eqref{eq:Sys} with $Q \in \cC^1(\R^m \times \R^{\kappa} \times \R^a, \R^{q})$ describes the internal dynamics, i.e., the dynamics within the system, which cannot be directly measured at the output, with dimension~$\kappa := l-m \in \N$ ($l$ is the state dimension in~\eqref{eq:Sys_Orig}). 
Referring to system~\eqref{eq:Sys} above, we formally introduce the class of systems under consideration in the present article.
\begin{definition} \label{Def:System-class}
      We say that the system~\eqref{eq:Sys} belongs to the system class~$\cN$, written ${(P,\Gamma,Q,d)} \in \cN$, if $d\in L^\infty(\Rp,\R^a)$,
      the {symmetric part of the} high gain matrix $\Gamma$ is {positive} definite, i.e., $\Gamma(\cdot) + \Gamma(\cdot)^\top > 0$,
      and the internal dynamics satisfy the following bounded-input bounded-state property
      \begin{multline*}
          \forall \, c_0 > 0 \ \exists \, c_1 > 0 \ \forall \, \zeta^0 \in \R^{\kappa} \ \forall \, d \in L^\infty(\Rp,\R^a) 
          \ \forall \, y \in L^\infty(\Rp,\R^m): \\
           \| \zeta^0\| + \|y\|_\infty + \|d\|_\infty \le c_0 \Rightarrow \| \zeta(\cdot;0,\zeta^0,y,d) \|_\infty \le c_1,
      \end{multline*}
where $\zeta(\cdot;0,\zeta^0,y,d):\Rp\to\R^\kappa$ denotes the unique global solution of~\eqref{eq:ZeroDyn}. 
\end{definition}

We remark that the globality of the solution~$\zeta$ is ensured by the imposed conditions.
Note that it is also possible to allow for a negative definite matrix $\Gamma+\Gamma^\top$. Then, we may simply change the sign in the control~\eqref{eq:u} defined below.
The last condition, namely bounded-input bounded-state stability of the internal dynamics, is a common condition for control systems, cf.~\cite{byrnes1988local,khalil2000universal}. 
In our particular case this ensures that the internal states, which cannot be measured directly, are bounded if the output of the system evolves within a bounded funnel around the reference signal and the disturbance is bounded.
For linear systems the function $Q(\cdot)$ consists of three matrices $Q_\zeta \in \R^{\kappa \times \kappa}$, $Q_y \in \R^{\kappa \times m}$ and $Q_d \in \R^{\kappa \times a}$ ($\dot \zeta = Q_\zeta \zeta + Q_y y + Q_d d$),
and the system satisfies the bounded-input bounded-state property, if the matrix $Q_\zeta$ is Hurwitz (all eigenvalues have negative real part).
Such systems are called \emph{minimum phase}, cf.~\cite{liberzon2002output,drucker2023trajectory}. 

For the unknown system~\eqref{eq:Sys} we consider a control-affine surrogate model
\begin{subequations} \label{eq:Mod}
    \begin{align}
\dot y_{\rm M}(t) &= p(\yM(t),\eta(t)) + \gamma(\yM(t),\eta(t)) u(t), && \yM(0) = \yM^0, \label{eq:yM_model} \\
        \dot \eta(t) &= q(\yM(t),\eta(t)), && \ \ \,  \eta(0) = \eta^0, \label{eq:zerodyn_model}      
    \end{align}
\end{subequations}
where properties of the functions
$p,\gamma$ and~$q$ are specified in \Cref{Def:Model}.
Equation~\eqref{eq:zerodyn_model} describes the internal dynamics. 
Referring to~\eqref{eq:Mod}, we introduce the class of feasible models. 
This class is parameterized by the values~$\ModMax$, which ensure that
the dynamics of the model~\ref{eq:Mod} remain uniformly bounded as long as the output $y_{\rm M}$ evolves 
within the funnel given by $\psi$ around the reference trajectory $y_{\rf}$.
This implies, in particular, that the control effort of the model-based controller component 
remains uniformly bounded for all models. 
The tracking task can therefore be successfully performed by the
robust funnel MPC algorithm\cite[Alg.~3.8]{BergDenn23} for a given reference and funnel boundary with the same upper bound~$\umax$ for the maximal control value.

\begin{definition}\label{Def:Model}
    Let $\bar u \ge 0$ and $\bar \gamma > 0$.
    For given $\psi \in \cG$ and $\yref \in W^{1,\infty}(\Rp,\R^m)$
    a model~\eqref{eq:Mod} is said to belong to the model class~$\cM$, 
    written $(p,\gamma,q, \eta^0) \in \cM$, 
    if there exist $\nu \in \N_0$ and $\bar{\eta}\geq 0$ such that the vector $\eta^0\in\R^\nu$ and the functions $p \in \cC^1(\R^m \times \R^{\nu},\R^m)$, 
    $\gamma \in \cC^1(\R^m \times \R^{\nu},\R^{m \times m})$, and $q \in \cC^1(\R^m \times \R^{\nu}, \R^{\nu})$ satisfy the following conditions:
    \begin{enumerate}[label = (M.\arabic{enumi}), ref =  (M.\arabic{enumi}), leftmargin=*]
        \item \label{r1} $\gamma(\rho,\eta)\in\Gl_m(\R)$ and $\|\gamma(\rho,\eta)\| \le \bar\gamma$ for all $ (\rho,\eta) \in \R^m \times \R^{\nu}$, 
        \item \label{BIBO} the solutions of the internal dynamics~\eqref{eq:zerodyn_model} are bounded by~$\bar{\eta}$
        for all trajectories which evolve in the performance funnel defined by~$\psi\in\cG$ around~$y_{\rm ref}$, that is
        \[
          \fa \yM \in  \setdef{ y  \in \con(\Rp,\R^m) }{ \fa t \ge  0: \|y(t) - y_{\rm ref}(t)\| < \psi(t)}:\
            \SNorm{\eta(\cdot;0,\eta^0,\yM)} \leq \bar{\eta},
        \]
        where $\eta(\cdot;0,\eta^0,\yM):\Rp\to\R^\nu$ denotes the global
        solution of~\eqref{eq:zerodyn_model},
        \item \label{Item:BoundModMax} with the set
         \begin{equation}\label{def:SetPsi}
         \Psi=\bigcup_{t\in\Rp} \setdef{\rho\in\R^m }{ \Norm{\rho-y_{\rf}(t)}\leq\psi(t) } 
         \end{equation}
        the functions $p(\cdot)$ and $\gamma(\cdot)^{-1}$ satisfy the uniform bound
        \begin{align}
        \left(\max_{(\rho,\eta)\in \Psi\times \dB_{\bar{\eta}}} \Norm{\gamma(\rho,\eta)^{-1}}\right)
        \left(
        \max_{(\rho,\eta)\in \Psi\times \dB_{\bar{\eta}}} \Norm{p(\rho,\eta)}
        + \| \dot \psi \|_\infty + \| \dot y_{\rm ref}\|_\infty \label{eq:umax_fmpc}
        \right) &\le \bar u .
        \end{align}
    \end{enumerate}
\end{definition}

We comment on the different ingredients of \Cref{Def:Model}.
Note that, although present in~\eqref{eq:Mod}, the initial value $\yM^0$ is not part of the tuple $(p,\gamma,q, \eta^0) \in \cM$
since the model output $\yM(t_k)$ is initialized with the system output~$y(t_k)$ at time instances~$t_k$ in \Cref{Algo:LearningRFMPC}.
Condition~\ref{r1} means that model~\eqref{eq:Mod} has global strict relative degree one with high-gain matrix uniformly bounded by $\bar \gamma > 0$. Restricting the matrix-valued function~$\gamma$ to have maximal norm~$\bar \gamma$ is used in the feasibility proof of \Cref{Algo:LearningRFMPC}, where we use 
\cite[Prop.~6.1, Thm.~3.10]{BergDenn23}. There, a bound on~$\gamma$ is used to have a uniform bound on the derivative of the model's output~$\| \dot y_{\rm M}\|_\infty$ to show uniform boundedness of the funnel control feedback signal.
Note that in order to utilize existing results it is necessary to {include a condition on the norm of~$\gamma$ (like} the parameter~$\bar \gamma$ in~\ref
{r1}) in \Cref{Def:Model}, since~\eqref{eq:umax_fmpc} does not imply a uniform bound on~$\gamma(\cdot)$. Condition~\ref{BIBO} ensures that the internal dynamics~\eqref{eq:zerodyn_model} evolve within an a-priori fixed compact set.
More precisely, if the internal state is bounded at every beginning of the MPC cycle, and the model output evolves within the funnel boundaries around the reference signal, then the internal dynamics evolve within an a-priori-fixed compact set.
This property is used in the feasibility proof of the funnel MPC algorithm, cf.~\cite[Lem.~4.8, Prop.~4.9]{BergDenn21}, and also for the robust funnel MPC algorithm, cf.~\cite[Prop.~5.1, Thm.~3.13]{BergDenn23}.
It was shown in~\cite[Prop.~4.9]{BergDenn21} that there exists a control $u\in L^{\infty}([t_k,t_k+T],\R^m)$
with $\|u\|_\infty \le \umax$ that achieves the tracking task when applied to the surrogate model~\eqref{eq:Mod}, if~\eqref{eq:umax_fmpc} holds.
Therefore, condition~\ref{Item:BoundModMax} ensures that the
prescribed maximal control effort~$\umax$ is sufficient to keep the tracking
error within the funnel boundaries. Since~$\umax$ is fixed in advance, the
models determined by the learning component must be amenable to the achievement
of the tracking objective under control constraints $\|u\|_\infty \le \umax$.

\begin{remark}
    Note that $\bar u = 0 $ is feasible. Such a choice, however, implies 
    $\max_{(\rho,\eta)\in \Psi\times \dB_{\bar{\eta}}} \Norm{p(\rho,\eta)}
        = \| \dot \psi \|_\infty = \| \dot y_{\rm ref}\|_\infty = 0$.
    Moreover, for each~$\bar u > 0$ there exists $\bar{\gamma}>0$ such that the set~$\cM$ is non-empty.
    Namely, setting $p(\cdot) = 0$, $q(\cdot) = 0$ and~$\eta^0 = 0$, we have $(0,\bar \gamma I,0,0) \in \cM$ for $\bar \gamma \ge (\|\dot y_{\rm ref}\|_\infty + \| \dot \psi\|_\infty )/\bar u$.
\end{remark}

\ \\
    There is some difference between the two classes of systems~$\cN$ and models~$\cM$.
     Under certain technical assumptions, cf.~\cite[Cor.~5.7]{ByrnIsid91},  system~\eqref{eq:Sys} as well as model~\eqref{eq:Mod} may be obtained via a transformation from a state-space representation~\eqref{eq:Sys_Orig} or~\eqref{eq:Mod_Orig}, respectively.
    To avoid 
    these technicalities, we restrict ourselves to the input-output representations~\eqref{eq:Sys} and~\eqref{eq:Mod}.
    The system class~$\cN$ encompasses systems with bounded external or internal
    disturbances $d \in L^\infty(\Rp,\R^a)$, while no such disturbances are allowed in the model class~$\cM$.
    The high-gain matrix $\Gamma$ of the system is required to {have a positive definite symmetric part}; in contrast, the high-gain matrix~$\gamma$ in the model is only required to be invertible.

\begin{remark}
    The dimension~$\kappa$ of the internal dynamics of system~\eqref{eq:Sys} is unknown but fixed. 
    In contrast, the dimension $\nu \in \N_0$ of the model's internal dynamics can be considered as a parameter in the learning step.
    This means, in order to improve the model such that it ``explains'' the system measurements, the dimension of the internal state can be varied. Note that $\nu = 0$ (no internal dynamics) is explicitly allowed for the model.
\end{remark}

\begin{remark} \label{Rem:etaBounded}  
        One way to verify satisfaction of~\ref{BIBO} is to apply \cite[Thm.~4.3]{Lanz21}, which states the following.
        Let there exists $V \in \cC^1( \R^\nu , \Rp)$ such that $V(\eta) \to \infty$ as $\|\eta\| \to \infty$, and for $q \in \cC(\R^\nu \times \R^m, \R^\nu)$
        we have
        $
            V'(\eta) \cdot q(\eta,\rho) \le 0 
        $
        for all $\rho \in \R^m$ with $\|\rho\| \le \| \psi\|_\infty + \|\yref\|_\infty$ and all
        $\eta \in \R^\nu$ with $\|\eta\| > \alpha$
        for a predefined~$\alpha > 0$. 
        Then, $\|\eta(t;0,\eta^0,\yM)\|_\infty \le \max\{ \|\eta^0\|, \alpha \}$ for all $t\ge 0$, $\eta^0\in\R^\nu$ and 
        all trajectories $\yM$ which evolve in the performance funnel defined by~$\psi\in\cG$ around~$y_{\rm ref}$, i.e., as in \ref{BIBO}.
        Hence, fixing $V(\cdot)$ and $\alpha>0$ in advance can be used to restrict choices of~$q(\cdot)$ satisfying~\ref{BIBO}.
        We will make use of this fact later in \Cref{Sec:Learning-scheme}, where we discuss a particular learning scheme.
\end{remark}

\section{Learning-based robust funnel MPC} \label{Sec:Learning}
In this section we develop the control algorithm and present our main result. First, we establish the control methodology to achieve the control objective introduced in \Cref{Sec:Problem}.
For the sake of completeness and readability, we recall the robust funnel MPC algorithm proposed in~\cite[Alg.~3.7]{BergDenn23}, adapted to our framework. 
Before, % 
we define an abstract learning scheme~$\cL$, which is incorporated in the algorithm.
The idea of the learning component is to use measurement data from the model output~$\yM$, the system output~$y$, the funnel MPC signal~$\uMPC$ and the funnel control signal~$u_{\rm FC}$
to improve the model used for computation of~$\uMPC$ in every MPC sampling interval (cf.~\Cref{Fig:learning-RFMPC}), under the condition that the updated model is still a member of the model class~$\cM$ given in \Cref{Def:Model}.

For example, let 
$(\yM, y, \uMPC, u_{\rm FC})|_{I_0^k} $
be given data collected up to~${t = t_k}$ over the interval $I_0^k:=[0,t_k]$. 
This data is then used to choose $(p,\gamma,q,\eta^0) \in \cM$, i.e.,
\[
\cL\big( (\yM, y, \uMPC, u_{\rm FC})|_{I_0^k} \big) 
=(p,\gamma,q,\eta^0) \in \cM.
\]
The function $\cL$ receives the data, which are available up to the time instant~$t \in \Rp$, and maps it to suitable model functions $(p,\gamma,q,\eta^0) \in \cM$.
Note that $\cL$ does not necessarily use all available data, see the discussion in \Cref{Sec:Learning-scheme}.
Since all four signal types, the outputs~$y$ and~$\yM$ and the control signals~$\uMPC$ and $u_{\rm FC}$, are bounded, the space of data is~$L^\infty$; moreover, all of these four signals have the same dimension~$m \in \N$.
This motivates the following definition.
\begin{definition} \label{Def:Learning}
    For $ \bar u \ge 0$, $\bar \gamma > 0$ we call a function    
$
        \cL : \bigcup_{t\geq 0}L^\infty([0,t],\R^m)^4\to \cM
$
    a feasible learning scheme for robust funnel MPC with respect to the model class~$\cM$, in short notation $\cM$-feasible. 
\end{definition}

\subsection{Control algorithm and main result}
Now, we summarize the reasoning so far in the following algorithm, which achieves the control objective formulated in \Cref{Sec:Problem}. It is a modification of~\cite[Alg.~3.7]{BergDenn23}.
Here, the proper re-initialization {of the model at every time step} done in~\cite{BergDenn23} is substituted by the learning component~$\cL$.
Concerning notation, for an input~$u\in L^\infty([t_k,t_k+T],\R^m)$, we denote by $\yM(\cdot;t_k,y_k^0,\eta_k^0,u)$ the first component of the unique maximal solution of~\eqref{eq:Mod} under the initial condition $(\yM(t_k),\eta(t_k)) = (y_k^0,\eta_k^0) \in \R^{m + \nu}$.
The algorithm involves the following user-defined parameters. 
The time shift~$\delta >0$, the prediction horizon~$T \ge \delta$, 
the parameters $\bar u \ge 0$ and $\bar \gamma > 0$ defining the model class, 
and an initial model $(p_0,\gamma_0,q_0,\eta_0^0) \in \cM$.

\begin{algo}[Learning-based robust funnel MPC]\label{Algo:LearningRFMPC} \ \\
    \textbf{Input}: %
            Instantaneous measurements of the output $y(t)$ of system~\eqref{eq:Sys},
            reference signal $y_{\rf}\in W^{1,\infty}(\Rp,\R^{m})$,
            funnel function $\psi\in\cG$,
            input saturation level $\umax\geq0$ for the MPC component,
            bound~$\bar \gamma>0$ on the norm of~$\gamma(\cdot)$,
            an initial model $(p_0,\gamma_0,q_0,\eta_0^0)\in\cM \neq \emptyset$,
            and an $\cM$-feasible learning scheme~$\cL$.\\
    \textbf{Initialization}: Set %
    time shift $\delta >0$, prediction horizon $T\geq\delta$, and index $k:=0$. %\\
    Define %
    the time sequence~$(t_k)_{k\in\N_0} $ by $t_k := k\delta$.\\ 
    \textbf{Steps:}
    \begin{enumerate}[label=(\alph{enumi}), ref=(\alph{enumi}), leftmargin=*]
     \item \label{agostep:RobustFMPCFirst} 
     Initialize the model~\eqref{eq:Mod} given by $(p_{k},\gamma_{k},q_{k},\eta_{k}^0) \in \cM$ with the data $(\yM(t_k),\eta(t_k)) = (y(t_{k}),\eta_{k}^0)$.
     \item \textbf{\textsc{Funnel MPC}} \\ 
     For~$\ell$ as in~\eqref{eq:stageCostFunnelMPC} compute a solution $u_{\rm FMPC}$ of the optimal control problem
    \begin{equation}\label{eq:RobustFMPCOCP}
        \mathop
        {\operatorname{minimize}}_{\substack{u\in L^{\infty}(I^k,\R^{m}),\\\SNorm{u}\leq \umax}}  
            \int_{I^k}\ell(t,\yM(t;t_k,{y(t_k)},\eta_k^0,u),u(t))\,\mathrm{d}t,
    \end{equation}
    over the interval~$I^k:=[t_k,t_k+T]$.
    Predict the output~$ y_{\rm M}(t)=\yM(t;t_k,{y(t_k)},\eta_k^0,u_{\rm FMPC})$ of the model~\eqref{eq:Mod} on the interval $[t_k,t_{k+1}]$ 
    and define the adaptive funnel $\phi: [t_k,t_{k+1}]\to\Rpp $ by 
    \begin{equation} \label{alg:eq:vp}
        \phi(t):=\psi(t)- \|  y_{\rm M}(t) - y_{\rf}(t) \|. 
    \end{equation}
    \item \textbf{\textsc{Funnel control}} \\
    Define the funnel control law with $y_{\rm M}|_{[t_k,t_{k+1}]}$ and funnel~$\vp$ as in~\eqref{alg:eq:vp} by
    \begin{equation} \label{alg:eq:FC}
         u_{\rm FC}(t,y(t)) := - \beta\left(\frac{\|y(t)-\yM(t)\|}{\vp(t)}\right) \frac{\vp(t) (y(t)-\yM(t))}{\vp(t)^2 - \|y(t)-\yM(t)\|^2},
    \end{equation}
    for $t\in[t_k,t_{k+1})$, where the activation function
    $\beta\in\con([0,1],[0,\beta^{+}])$ with $\beta^{+}>0$ is such that $\beta(1) = \beta^+$.
    Apply the feedback control $\mu:[ t_k,t_{k+1})\times \R^m\to\R^m$ to system~\eqref{eq:Sys}, given by
        \begin{equation} \label{eq:u}
            \mu(t,y(t)) = u_{\rm FMPC}(t)+ u_{\rm FC}(t,y(t)).
        \end{equation}
            \item \textbf{\textsc{Continual learning}} \\
            Increment~$k$ by $1$,
        find a feasible model for the next sampling interval 
        \begin{equation*}
          \cL\big( (\yM,y,\uMPC,u_{\rm FC})|_{I_0^k} \big) = (p_{k},\gamma_{k},q_{k},\eta_{k}^0)
        \end{equation*}
        with $I_0^k:=[0,t_{k}]$. Then, go to Step~\ref{agostep:RobustFMPCFirst}. 
    \end{enumerate}
\end{algo}
Now we are in the position to formulate the main result of the present article, which extends~\cite[Thm.~3.13]{BergDenn23} by the learning component. 

\begin{theorem} \label{Thm:learningFRMPC}
    Consider a system~\eqref{eq:Sys} with ${(P,\Gamma,Q,d)} \in \cN$ and {initial values $y^0\in\R^m$ and $\zeta^0\in\R^\kappa$}.
    For given $\bar u \geq 0$ and $\bar \gamma>0$ such that $\cM\neq\emptyset$, choose an initial model~\eqref{eq:Mod} with $(p_0,\gamma_0,q_0,\eta_0^0) \in \cM$.
    Let a reference signal $y_{\rf} \in W^{1, \infty}(\Rp,\R^m)$ and a {funnel function} $\psi \in \cG$ be given {such that $\|y^0-\yref(0)\|<\psi(0)$.}
    Further, let~$\cL$ be an $\cM$-feasible learning scheme.
    Then, the learning-based robust funnel MPC \Cref{Algo:LearningRFMPC} 
    with $\delta > 0$ and $T \ge \delta$ is initially and recursively feasible, i.e., 
    at every time instance $t_k := k \delta$ for $k \in \N_0$ the OCP~\eqref{eq:RobustFMPCOCP} has a solution $u_k^* \in L^\infty([t_k,t_k+T],\R^m)$ with $\SNorm{u_k^\star}\leq \umax$, and
     the closed-loop system consisting of system~\eqref{eq:Sys} and the feedback~\eqref{eq:u} has a global solution $(y,\zeta) : [0,\infty) \to \R^m \times \R^\kappa$. 
    In particular, each global solution $(y,\zeta)$ satisfies:
    \begin{enumerate}[label = (\roman{enumi}), ref = (\roman{enumi}), leftmargin=*]
        \item \label{Thm:SignalsBounded} all signals are bounded, in particular, we have $\uMPC, u_{\rm FC}, y \in L^\infty(\Rp,\R^m)$, 
        \item \label{Thm:FunnelProperty} the tracking error $y-y_{\rm ref}$ evolves within prescribed boundaries, i.e.,
        \[
            \forall \, t \ge 0 \, : \ \| y(t)-y_{\rm ref}(t) \| < \psi(t).
        \]
    \end{enumerate}
\end{theorem}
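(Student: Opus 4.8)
The plan is to reduce the statement to the recursive feasibility of robust funnel MPC established in~\cite[Thm.~3.13]{BergDenn23}, and to exploit that this argument is \emph{local}: at each sampling instant~$t_k$ it relies only on the current model lying in the model class~$\cM$ and on the current output satisfying $\|y(t_k)-\yref(t_k)\|<\psi(t_k)$. The decisive point is that the model is re-initialized at every step via $\yM(t_k)=y(t_k)$ and $\eta(t_k)=\eta_k^0$, so that the feasibility analysis on~$[t_k,t_{k+1}]$ is insensitive to the models used in earlier steps. Since every tuple $(p_k,\gamma_k,q_k,\eta_k^0)$ produced by an $\cM$-feasible learning scheme~$\cL$ satisfies~\ref{r1}--\ref{umax} with the \emph{same} data $\bar u$, $\psi$, $\yref$ (and hence with the same structural bound~$\bar\eta$), the constants entering the single-step estimate remain valid uniformly in~$k$.

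First I would isolate the single-step statement: for any $(p,\gamma,q,\eta^0)\in\cM$ and any $y(t_k)$ with $\|y(t_k)-\yref(t_k)\|<\psi(t_k)$, the OCP~\eqref{eq:RobustFMPCOCP} admits a minimizer with $\|u\|_\infty\le\bar u$ by the funnel MPC feasibility argument of~\cite[Prop.~5.1]{BergDenn23}, whose hypotheses are precisely~\ref{r1}--\ref{umax}; the associated prediction~$\yM$ then stays strictly inside~$\psi$ on~$[t_k,t_{k+1}]$, so the adaptive funnel~$\phi$ in~\eqref{alg:eq:vp} is positive. Applying the funnel control law~\eqref{alg:eq:FC} to the system~\eqref{eq:Sys} with $(P,\Gamma,Q,d)\in\cN$, the mismatch $y-\yM$ remains strictly inside~$\phi$, and the triangle inequality then yields $\|y(t)-\yref(t)\|<\psi(t)$ on~$[t_k,t_{k+1}]$. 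This is exactly the robustification mechanism of~\cite[Thm.~3.13]{BergDenn23}, for which the relative-degree-one property~\ref{r1} of the model and the positive definiteness of $\Gamma+\Gamma^\top$ in~$\cN$ render classical funnel control applicable to the error dynamics.

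I would then induct on~$k$. The base case $k=0$ is furnished by the hypotheses $(p_0,\gamma_0,q_0,\eta_0^0)\in\cM$ and $\|y^0-\yref(0)\|<\psi(0)$. Assuming the claim through step~$k$, in particular $\|y(t_k)-\yref(t_k)\|<\psi(t_k)$, the $\cM$-feasibility of~$\cL$ guarantees that $\cL\big((\yM,y,\uMPC,u_{\rm FC})|_{I_0^k}\big)=(p_k,\gamma_k,q_k,\eta_k^0)\in\cM$, so the single-step statement applies verbatim and extends the claim to~$[t_k,t_{k+1}]$. Concatenating the local solutions yields a global solution $(y,\zeta):[0,\infty)\to\R^m\times\R^\kappa$; it is global because the error never reaches the funnel boundary (ruling out finite-time blow-up of the feedback~\eqref{eq:u}), $\zeta$ stays bounded by the bounded-input bounded-state property of~$\cN$, and the control is bounded ($\uMPC$ by~$\bar u$, and $u_{\rm FC}$ by the standard funnel control estimate through~$\beta^+$). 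This establishes claims~\ref{Thm:SignalsBounded} and~\ref{Thm:FunnelProperty}.

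The main obstacle is the locality verification underlying the second paragraph: one must confirm that the feasibility estimates of~\cite{BergDenn23} use \emph{only} membership of the current model in~$\cM$ together with the re-initialization $\yM(t_k)=y(t_k)$, $\eta(t_k)=\eta_k^0$, and never the assumption that the model functions are fixed across steps. Concretely, the constants $G_{\max}$, $P_{\max}$, $\bar\eta$ and the resulting bound on~$\uMPC$ must be controlled \emph{uniformly in~$k$} by the defining data $\bar u$, $\psi$, $\yref$ of~$\cM$; this uniformity, which in particular absorbs a possibly different internal dimension~$\nu_k$ at each step, is precisely the consistency condition encoded in~\Cref{Def:Model}, and is what lets the learning component be inserted without disturbing the recursion.
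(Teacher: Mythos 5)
Your proposal is correct and follows essentially the same route as the paper: reduce to the single-step feasibility results of \cite{BergDenn21,BergDenn23} (in particular \cite[Prop.~5.1, Thm.~3.13]{BergDenn23}), observe that the $\cM$-feasibility of~$\cL$ guarantees each updated model satisfies \ref{r1}--\ref{umax} with the fixed~$\umax$, $\psi$, $\yref$, iterate over the sampling instants, and conclude the funnel property by the triangle inequality via the adaptive funnel~\eqref{alg:eq:vp}. Your only imprecision is the parenthetical claim that all learned models share ``the same structural bound~$\bar\eta$'' --- in \Cref{Def:Model} the constant~$\bar\eta$ is existentially quantified per model and may vary across learning steps --- but this is harmless since the uniformity that the recursion actually needs is carried by the fixed input bound~$\umax$ in~\ref{umax}, exactly as you use it.
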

\begin{proof}
To begin with, we show initial feasibility of \Cref{Algo:LearningRFMPC}.
By construction of~$\cM$, and since $\|y^0 - \yref(0)\| < \psi(0)$, invoking~\cite[Prop.~4.9]{BergDenn21} there exists a control $u \in L^\infty([0,T],\R^m)$ with $\|u\|_\infty \le \bar u$ such that $\| \yM(t;0,y^0,\eta_0^0,u) - \yref(t) \| < \psi(t)$ for all $t \in [0,T]$.
Then,~\cite[Thm.~4.5]{BergDenn21} yields that the OCP~\eqref{eq:RobustFMPCOCP} has a solution.
Moreover, $\vp(t) = \psi(t) - \| \yM(t) - \yref(t) \| > 0$ for all $t \in [0,\delta] \subset [0,T]$.
Therefore, the closed-loop system~\eqref{eq:Sys},~\eqref{alg:eq:FC} has a solution on~$[0,\delta]$ according to \cite[Thm.~7]{IlchRyan02b}.
Then, the result~\cite[Thm.~3.13]{BergDenn23} is applicable and yields the existence of a solution of the closed-loop system~\eqref{eq:Sys} and~\eqref{eq:u} on $[0,\delta]$,
where the initialization strategy in~\cite{BergDenn23} is substituted by the initial choice of the model 
in \Cref{Algo:LearningRFMPC}.
Recursive feasibility can then be obtained as follows.
We aim to apply \cite[Prop.~5.1]{BergDenn23}, which 
yields that the OCP in~\eqref{eq:RobustFMPCOCP} has a solution for all~$k \in \N$,
if~$y_{\rm ref}$ and~$\psi$ are defined globally, and the bound~$\bar u \geq 0$ is sufficiently large for the actual model.
The first two conditions are satisfied since~$\yref \in W^{1,\infty}(\Rp,\R^m)$ and $\psi \in \cG$, and the last since the learning scheme is chosen to be $\cM$-feasible.

Next, we have to take care about the possible induced discontinuities in the function~$\vp$ in~\eqref{alg:eq:vp}. These discontinuities, however, occur in both cases either by re-initialization as in~\cite{BergDenn23}, or due to the learning step in \Cref{Algo:LearningRFMPC}.
Therefore, the existence of a global solution of the closed-loop system~\eqref{eq:Sys} and~\eqref{eq:u} follows with the same reasoning as in~\cite[Thm.~3.13]{BergDenn23}.
Namely, $\|y(t_k) - \yref(t_k) \| < \vp(t_k)$ is satisfied for each $k \in \N$ by the previous argument.
Then, \cite[Thm.~7]{IlchRyan02b} yields the existence of a solution of the closed-loop system~\eqref{eq:Sys},~\eqref{alg:eq:FC} on $[t_k, t_k + \delta]$ for all $k \in \N$.
The boundary $\bar{\gamma}$ on $\gamma_k(\cdot)$ and inequality \eqref{eq:umax_fmpc} in property \ref{Item:BoundModMax} ensure that
$\max_{(\rho,\eta)\in \Psi\times \dB_{\bar{\eta}}} \Norm{\gamma_k(\rho,\eta)}$ and $\max_{(\rho,\eta)\in \Psi\times \dB_{\bar{\eta}}} \Norm{p_k(\rho,\eta)}$ with $ \Psi=\bigcup_{t\in\Rp} \setdef{\rho\in\R^m }{ \Norm{\rho-y_{\rf}(t)}\leq\psi(t) }$
are uniformly bounded independent of the model $(p_k,\gamma_k,q_k,\eta_k^0) \in \cM$ generated by the  learning scheme $\cL$.
Thus, 
\[
    \Norm{\dot{y}_{\rm M}}\leq \max_{(\rho,\eta)\in \Psi\times \dB_{\bar{\eta}}} \Norm{p_k(\rho,\eta)}
    +\max_{(\rho,\eta)\in \Psi\times \dB_{\bar{\eta}}} \Norm{\gamma_k(\rho,\eta)}\bar{u}
\]
is uniformly bounded. The bound on the signal generated by the funnel controller computed in~\cite[Thm.~3.13]{BergDenn23}
depends on $\Norm{\dot{y}_{\rm M}}$. 
Repeating the same calculations, one therefore can show that the funnel controller component is uniformly bounded,
i.e., $\SNorm{u_{\rm FC}}<\infty$.
This also yields assertion~\ref{Thm:SignalsBounded}. %
Assertion~\ref{Thm:FunnelProperty} follows by the estimation
   $ \| y(t) - y_{\rm ref}(t)\| = \| y(t) - \yM(t) + (\yM(t) - y_{\rm ref}(t) ) \| 
    < \psi(t) - \|\yM(t) - y_{\rm ref}(t) \| + \|\yM(t) - y_{\rm ref}(t) \| = \psi(t)$
for all~$t \ge 0$.
\end{proof}

\subsection{On learning schemes}\label{Sec:Learning-scheme}

In recent years, data-driven control has gained considerable interest and a plethora of research works have been published.
The respective results can, roughly, be separated into control schemes for linear systems and techniques developed to control nonlinear systems.
Bolstered by successful applications~\cite{Mezi13}, powerful numerical methods such as extended dynamic mode decomposition~\cite{williams:kevrekidis:rowley:2015},
and theoretical advancements --including convergence guarantees in the infinite-data limit~\cite{korda:mezic:2018b} and finite-data error bounds~\cite{KohnPhil24},
even for stochastic control systems~\cite{nuske2023finite}-- the Koopman formalism~\cite{BrunKutz22}, originally introduced in~\cite{koopman1931hamiltonian},
has become a well-established approach for designing data-driven controllers\cite{GoswPale21,OttoRowl21,StraScha24:generator}.
Furthermore, an extension to model predictive control (MPC) has been explored, with recent work providing rigorous closed-loop guarantees\cite{KordMezi18:MPC,BoldGrun25,BoldScha25}.
For linear time-invariant systems, Subspace Predictive Control~\cite{favoreel1999spc} became a popular technique, and the so-called fundamental lemma by Willems and coauthors~\cite{willems2005note} paved the way for direct data-driven control such as DeePC~\cite{coulson2019data}, to name but two.
The structural conditions provided in \Cref{Def:Learning} can be used to define suitable learning algorithms based on the previously discussed techniques~--~for linear as well as for nonlinear systems.

In this section we provide sufficient conditions on the parameters of linear models, which then
guarantee that a respective model is contained in~$\cM$ for fixed~$\bar u,{\bar\gamma}$.
We then invoke these conditions to discuss $\cM$-feasibility of a particular learning scheme.
Here, we consider models
\begin{equation} \label{eq:LearningModel}
    \begin{aligned}
        \dot y_{\rm M}(t) &= R \yM(t) + S \eta(t) + D_1+ \gamma u(t) , & \yM(0)&= \yM^0, \\
        \dot \eta(t) &= Q \eta(t) + P \yM(t) + D_2  , & \eta(0) &= \eta^0.
    \end{aligned}
\end{equation}
In the following, we denote by $\lambda_Q^{+} < 0$ the largest eigenvalue of a symmetric negative definite matrix $Q = Q^\top < 0$.
For a given reference signal $\yref \in W^{1,\infty}(\Rp,\R^m)$ and 
funnel boundary~$\psi \in \cG$ let $\bar \rho := \max_{\rho \in \Psi} \Norm{\rho}$ for the set~$\Psi$ defined in~\eqref{def:SetPsi}. 
Furthermore, for given numbers~$\bar \eta, \bar{d}, \bar{r},\bar{s},\hat \gamma,\bar{p} \ge 0$ 
we define the following set of matrices, where we do not indicate the dependence on the parameters. Let 
\begin{equation*}
    \bar \cK  := 
    \R^{m \times m}  \times \R^{m \times \nu}  \times \Gl_m(\R)  \times \R^{m}  \times \R^{\nu \times \nu}  \times \R^{\nu \times m}  \times \R^{\nu}  \times {\R^\nu} ,
\end{equation*}
and define
\begin{equation}\label{eq:SetOfRetrictions}
    \mathcal{K} := \setdef{(R,S,\gamma,D_1,Q,P,D_2,\eta^0) \in \bar \cK}{\eqref{eq:ConditionsMatrices}} ,
\end{equation}
where
    \begin{align} \label{eq:ConditionsMatrices}
        \| R \| \le \bar{r}, \ 
        \| S \| \le \bar{s}, \ 
       \max\left\{\| \gamma\|, \| \gamma^{-1}\|\right\}  \le \hat \gamma, \ 
       \max \left\{ \|D_1\|,\|D_2\|  \right\}  \le \bar{d}  , \ 
        \lambda_Q^{+}  \le - \frac{\bar{p} \bar \rho + \bar{d}}{\bar \eta}, \ 
        \| P \| \le \bar{p} , \ 
        \| \eta^0 \|  \le \bar \eta.
    \end{align}
For $(R,S,\gamma,D_1,Q,P,D_2,\eta^0) \in \bar \cK$ we define the functions
\begin{equation} \label{eq:p_ParameterAndq_Parameter}
    \begin{aligned}
        p_{R,S,D_1} : \R^m\times\R^{\nu}&\to\R^m, &(y,\eta) &\mapsto Ry + S\eta + D_1, \\
        q_{Q,P,D_2} : \R^m\times\R^{\nu}&\to\R^{\nu},  &(y,\eta) &\mapsto Py + Q \eta + D_2.
    \end{aligned}
\end{equation}
Then, we may derive the following statement. 
\begin{proposition} \label{Lem:IfKThenM}
    Let a reference trajectory $\yref \in W^{1,\infty}(\Rp,\R^m)$ and a funnel function $\psi \in \cG$ be given and set $\bar \rho := \max_{\rho \in \Psi} \Norm{\rho}$, with $\Psi$ defined in~\eqref{def:SetPsi}.
    Further, let an input saturation level $\bar  u\ge 0$ and a uniform bound~$\bar \gamma > 0$ for the high-gain matrix~$\gamma \in \R^{m \times m}$ be given.
    If the parameters $\bar \eta, \bar{d}, \bar{r},\bar{s},\hat \gamma,\bar{p} \ge 0$ satisfy
    \begin{equation}\label{eq:DefModMax}
        \hat \gamma \left( \bar r \bar \rho + \bar s \bar \eta + \bar d + \|\dot y_{\rm ref}\|_\infty
         + \| \dot \psi \|_\infty \right) \le \bar u, \qquad {\hat \gamma \le \bar \gamma}
    \end{equation}
    then the set~$\mathcal{K}$ defined in~\eqref{eq:SetOfRetrictions} satisfies the implication
    \begin{equation*}
        (R,S,\gamma,D_1,Q,P,D_2,\eta^0) \in \mathcal{K}
        \implies 
        (p_{R,S,D_1}, \gamma, q_{Q,P,D_2}, \eta^0) \in \cM, 
        \end{equation*}
    where $p_{R,S,D_1}$ and $q_{Q,P,D_2}$ are given in~\eqref{eq:p_ParameterAndq_Parameter}.
\end{proposition}
\begin{proof}
The proof consists of three parts, with one part devoted to each condition \ref{r1}~--~\ref{Item:BoundModMax}
in Definition~\ref{Def:Model}, respectively. \\
\emph{Step one~\ref{r1}.} The choice $\gamma \in \Gl_m(\R)$ together with~\eqref{eq:ConditionsMatrices} and $\hat \gamma \le \bar \gamma$ in~\eqref{eq:DefModMax} satisfies condition~\ref{r1}.
\\
\emph{Step two~\ref{BIBO}.} We show for the given parameter ${\bar \eta \ge 0}$, that for any tuple of parameters contained in~$\mathcal{K}$, the solution of~\eqref{eq:LearningModel} satisfies $\|\eta(t;0,\eta^0,\yM) \| \le \bar \eta$ for all $t \ge 0$ and 
$  \yM \in  \setdef{ y  \in \con(\Rp,\R^m) }{ \fa t \ge  0: \|y(t) - y_{\rm ref}(t)\| < \psi(t)}$
. Note that any such $\yM$ satisfies $\| \yM\|_\infty \le \bar \rho$.
In virtue of \Cref{Rem:etaBounded} we calculate for $t \ge 0$ that
\begin{equation*}
    \begin{aligned}
        \dd{t} \tfrac{1}{2} \| \eta(t;0,\eta^0,\yM) \|^2  
        &= \eta(t;0,\eta^0,\yM)^\top \Big( Q \eta(t;0,\eta^0,\yM) + P \yM(t) + D_2 \Big) \\
        & \le \| \eta(t;0,\eta^0,\yM) \| \Big( \lambda_Q^+ \| \eta(t;0,\eta^0,\yM) \| + \bar{p} \bar \rho + \bar{d} \Big),
    \end{aligned}
\end{equation*}
which by $\lambda_Q^+<0$ is non-positive for $\| \eta(t;\eta^0,\yM) \| \ge (\bar{p} \bar \rho + \bar{d}) / |\lambda_Q^+|$. 
Therefore, \cite[Thm.~4.3]{Lanz21} yields
\[
    \fa t\ge 0:\ \|\eta(t; 0,\eta^0,\yM)\| \le \max\{ (\bar{p} \bar \rho + \bar{d}) / |\lambda_Q^+|, \|\eta^0\| \}.
\]
By assumption~\eqref{eq:ConditionsMatrices} we have $\| \eta^0\| \le \bar \eta$ and $|\lambda_Q^+| \ge (\bar{p} \bar \rho + \bar{d}) / \bar \eta$, hence it follows $\|\eta(t;0,\eta^0,\yM) \| \le \bar \eta$ for all $t \ge 0$. \\
\emph{Step three~\ref{Item:BoundModMax}.} 
In virtue of the inequalities~\eqref{eq:ConditionsMatrices} and~\eqref{eq:DefModMax}, we may estimate
 \begin{align*}
        \max_{(\rho,\eta)\in \Psi\times \dB_{\bar{\eta}}} \Norm{\gamma(\rho,\eta)^{-1}}
        \left(
        \max_{(\rho,\eta)\in \Psi\times \dB_{\bar{\eta}}} \Norm{p(\rho,\eta)}
        + \| \dot \psi \|_\infty + \| \dot y_{\rm ref}\|_\infty 
        \right)  \le \hat \gamma(\bar r \bar p + \bar s \bar \eta + \bar d + \|\dot y_{\rm ref}\|_\infty + \| \dot \psi \|_\infty)  &\le \bar u.
        \end{align*}
This completes the proof.
\end{proof}
With the set of parameters~$\cK$, the functions $p_{R,S,D_1},q_{Q,P,D_2}$ defined in~\eqref{eq:p_ParameterAndq_Parameter}, and \Cref{Lem:IfKThenM}, we may define a learning scheme $\cL$ mapping from $\bigcup_{t\geq 0}L^\infty([0,t],\R^m)^4$
to the subset
\[
\setdef{ (p_{R,S,D_1},\gamma,q_{Q,P,D_2},\eta^0) }
{(R,S,\gamma,D_1,Q,P,D_2,\eta^0)  \in  \cK }  
\]
of $\cM$, defined by
\begin{align*}
    \cL:\ & ((\yM,y, u_{\rm FMPC}, u_{\rm FC})|_{[0,t]}) 
    \mapsto (p_{R,S,D_1},\gamma,q_{Q,P,D_2},\eta^0)
\end{align*}
for some $t\ge 0$, where $(p_{R,S,D_1},\gamma,q_{Q,P,D_2},\eta^0)$ is determined by the solution of an optimization problem involving discrete measurements 
of the system data~$y$ and the  applied control signals $u_{\rm FMPC}$ and $u_{\rm FC}$ 
at time instances~$i\tau$ with $\tau>0$, $i\in\N$ and $i \tau\leq t$ of the form
\begin{equation}\label{eq:LearningOP}
    \begin{alignedat}{2}
             \hspace{1cm}&\hspace{-1cm}\mathop {\operatorname{minimize}}_{\substack{(R,S,\gamma,D_1,Q,P,D_2,\eta^0) \in \cK}}  \
             J((y,z)|_{[0,t]})\\
            \text{s.t.}\ 
                      &{\makebox[\widthof{$z(i\tau)$}]{$z(0)$}} =  z^0 \text{and for all }i\leq t/\tau:\\
                      &z(i\tau) =  \chi(\tau; z((i-1)\tau ),(u_{\rm FMPC}+u_{\rm FC})((i-1)\tau)),
    \end{alignedat}
\end{equation}
where $J(\cdot)$ is a suitable cost function, $z=(\tilde{y}_{\rm M},\eta)$ denotes the states of the linear model~\eqref{eq:LearningModel}, 
and the expression ${\chi(\cdot; z((i-1)\tau ),(u_{\rm FMPC}+u_{\rm FC})((i-1)\tau))}$ denotes its solution under the initial condition  $\chi(0) = z((i-1)\tau )$ 
and with constant control~$u(\cdot)\equiv(u_{\rm FMPC}+u_{\rm FC})((i-1)\tau)$.
In the following we discuss some possible choices for the cost function~$J(\cdot)$. 

\begin{enumerate}[label = (\roman{enumi}), ref = (\roman{enumi})]
    \item\label{Item:LearnEntireHistory} $J((y,z)|_{[0,t]}) := \sum_{i=0}^{\lfloor t/\tau\rfloor} \xi_i \| \tilde{y}_{\rm M}(i\tau) - y(i\tau)\|^2$ with weights $\xi_i \ge 0$. 
    The idea is to find a model in the set~$\cK$ which minimizes the weighted squared measured output errors.
    The weights $\xi_i$ reflect the relative importance of the measurements $y(i\tau)$. 
    In certain cases it might be beneficial to weight data points that are far in the past lower than current data points. 
    By choosing $\xi_i>0$ for all $i>0$ all measured past data is taken into account.
    With increasing runtime of the algorithm, this results in a growing complexity of the optimization problem,
    computation time,  and required memory space for the measurements.
    Therefore, this is not suitable in practice.
    Thus, it is beneficial to use a moving horizon estimation approach and only take the last $N$ measurements into account and set $\xi_{i}=0$ for $i<\lfloor t/\tau\rfloor-N$.
    In application, one has to find a good balance between considering many data points (large $N$), thus having a probably more accurate model,
    and low computation time and memory requirements (small $N$).
    
    \item If the computation of the solution of the optimization problem has to be done very quickly,
    it is also possible to  only consider the last measurement~$y(\lfloor t/\tau\rfloor\tau)$.
    Thus, one might choose the cost function   $J((y,z)|_{[0,t]}) := \| \tilde{y}_{\rm M}(\lfloor t/\tau\rfloor\tau) - y(\lfloor t/\tau\rfloor\tau)\|^2$.
    The idea is to find a model, which best explains the last MPC period in terms of output error,
    i.e., a model on the prediction interval $[t_k,t_{k+1}]$, so that with $\tau = \delta$
     the error $\| \tilde{y}_{\rm M}(t_{k+1}) - y(t_{k+1})\|$ at the end of the interval is minimal. 
    
    \item In addition, it is worth considering to include regularization terms for the model parameters in the cost function. 
    For the parameters $\cK_i=(R_i,S_i,\gamma_i,D_{1,i},Q_i,P_i,D_{2,i},\eta^0_i) \in  \cK$ 
    one could either penalize the weighted distance of $\cK_i$ to an a priori known tuple of parameters 
    $\cK^\star=(R^\star,S^\star,\gamma^\star,D^\star,Q^\star,P^\star,D_2^\star,{\eta^0}^\star)$ and thus allow only small adaptions of the a priori known model or 
    penalize the change of parameters $\cK_i$ such that the model does only change slightly between two learning steps. 
    This results in a cost function of the form 
    $J((y,z)|_{[0,t]}) := \sum_{i=0}^{\lfloor t/\tau\rfloor} \big (\xi_i \|  \tilde{y}_{\rm M}(i\tau) - y(i\tau)\|^2 + \sum_{j=1}^8k_i^j\|(\cK_i^j-\tilde{\cK}^j)\|\big )$,
    where $\tilde{\cK}=\cK^\star$ or $\tilde{\cK}=\cK_{i-1}$ and
    with weights $\xi_i ,k_i^j \ge 0 $.
    Here the expressions $\cK_i^j, \tilde \cK_i^j$ with $j=1,\ldots,8$ refer to the  $j^{\rm th}$ entry of the tuple $\cK_i$, $\tilde{\cK}_i$, respectively; for instance, $\cK_i^2 = S_i$.
\end{enumerate}

\section{Numerical simulation} \label{Sec:Simulation}
In this section we provide two numerical simulations to illustrate \Cref{Algo:LearningRFMPC}.
In the first example, we simulate the tracking task for a system of relative degree one, which is contained in the system class~$\cN$, and the corresponding surrogate model belongs to~$\cM$, for a given~$\bar u$.
The second example goes beyond the system class~$\cN$, it is a system of relative degree two.
We show that, although no theoretical results on its functioning are available yet, \Cref{Algo:LearningRFMPC} is successful in this case. 

\subsection{Relative degree one}
To illustrate the application of \Cref{Algo:LearningRFMPC}, we consider a model of an exothermic chemical reaction 
which was also used in~\cite{BergDenn21} to study funnel MPC and in~\cite{BergDenn23} to study robust funnel MPC.
The dynamics for one reactant~$\zeta_{1}$, a product~$\zeta_{2}$, and reactor temperature~$y$ is described by 
a system~\eqref{eq:Sys}
where $m=1$, $\kappa=2$, $a=0$, $\Gamma = 1$, $P(y,\zeta) = b_1 \alpha(y,\zeta) - b_2 y$ and 
$Q(y,\zeta) = \begin{smallbmatrix}
    c_1&0\\
    0&c_2
\end{smallbmatrix} 
\alpha(y,\zeta) + c_3(\zeta^{\rm in}-\zeta)$,
with the Arrhenius law $\alpha:\Rpp\times\Rp^2\to\Rp$ given by
$\alpha(y,\zeta):=k_0\me^{-k_1/y}\zeta_1$,
and parameters $k_0$, $k_1$, $b_1$, $b_2>0$, $c_1<0$, $c_2\in\R$, $c_3>0$, $\zeta^{\rm in}\in\Rp^2$.
The control objective is to steer the reactor's temperature~$y$ to a desired constant value $y_{\rf}\equiv\bar{y}$
within prescribed funnel boundaries $\psi\in\cG$,~i.e., $\Norm{y(t)-\bar y} < \psi(t)$ for all $t\geq0$.
For the funnel MPC component of~\Cref{Algo:LearningRFMPC} we consider linear models of the form~\eqref{eq:LearningModel}
with $R,  D_1\in\R$, $S,D_2^\top, P^\top\in\R^{1\times 2}$, and $Q\in\R^{2\times 2}$.
To learn the model from the measured data we use linear regression subject to the constraints introduced in \Cref{Def:Model,Def:Learning}. Hence, feasibility of the data-based models is guaranteed by \Cref{Lem:IfKThenM}.
We assume $\gamma= 1$ and as initial model we choose 
$R=  D_1=0\in\R$,
$S=D_2^\top= P^\top=0\in\R^{1\times 2}$,
$Q=0\in\R^{2\times 2}$, and $\eta^0=(0.02,0.9)$,
which represents an integrator chain with decoupled internal dynamics.
To improve this (deliberately poorly chosen) model over time, we adapt the matrices over a compact set~$\cK$
as in~\eqref{eq:SetOfRetrictions} at every fifth time step $t_k$
by minimizing the plant-model mismatch based on the data of the last system output $y(t_{k-1})$, i.e.,
we solve the optimization problem
\begin{equation*}
    \begin{aligned}
        \mathop{\operatorname{minimize}}_{\substack{(R,S,1,D_1,Q,P,D_2,\zeta(0))\in \cK}}  
            \|{y_{\rm M}(t_k)-y(t_k)}\|^2 \quad 
        \textrm{s.t.} \quad 
       \frac{\textrm{d}}{\textrm{d} t} \begin{pmatrix}
        y_{\rm M}(t)\\
        \eta(t)
    \end{pmatrix}  &=  
       \begin{bmatrix}
        R & S\\
        P & Q
        \end{bmatrix}
       \begin{pmatrix}
             y_{\rm M}(t)\\
            \eta(t)
        \end{pmatrix} 
         + 
       \begin{bmatrix}
        1\\
        0
        \end{bmatrix}
         u(t) +
       \begin{bmatrix}
        D_1 \\
        D_2 
        \end{bmatrix}, \\
        \begin{pmatrix}
                y_{\rm M}(t_{k-1})\\
                \eta(t_{k-1})
            \end{pmatrix}  &= 
         \begin{pmatrix}
             y(t_{k-1})\\
             \zeta(0)
         \end{pmatrix},
    \end{aligned}
\end{equation*}
where $u(t)= u_{\rm FMPC}(t_{k-1})+u_{\rm FC}(t_{k-1})$ which was applied to the model at the last time step~$t_{k-1}$
and $\zeta(0) = (\zeta_1(0),\zeta_2(0))$ is the vector of initial concentrations of the substances~$\zeta_1$ and~$\zeta_2$.

For the simulation we choose the funnel function  $\psi(t)=100\me^{-2t}+1.5$.
As in~\cite{BergDenn23,BergDenn21}, the initial data is $(y^0,\zeta_1(0),\zeta_2(0)):=(270,0.02,0.9)$, the reference signal is $y_{\rf}\equiv 337.1$, and the 
parameters are
     $c_1=-1$,
     $c_2=1$,
     $c_3= 1.1$,
     $k_0=\me^{25}$,
     $k_1=8700$,
     $b_1=209.2$,
     $b_2=1.25$,
     $\zeta_1^{\rm in}=1$,
     $\zeta_{2}^{\rm in}=0$.
In this example we choose~$\lambda_{u}=10^{-4}$, prediction horizon $T=1$, and time shift $\delta=0.1$ for  the funnel MPC component.
We choose for the set~$\cK$ as in~\eqref{eq:SetOfRetrictions}
the parameters as 
$\bar{r}=1.3$,
$\bar{s}=1.4$,
$\bar{\eta} =0.91$,
$\bar{\rho}=408.6$,
$\bar{\gamma}=1$,
$\bar{p}=1/400$,
$\bar{d} = 2.5$,
and  $\hat{\gamma}=1$.
In accordance with \Cref{Lem:IfKThenM}, 
we restrict the funnel MPC control to $\SNorm{u_{\rm FMPC}}\leq \bar{u}:=735$
so that~\eqref{eq:ConditionsMatrices} is satisfied for $\bar{\gamma}=1$. 
Thus, the learning scheme is ${\cM}$--feasible. Due to discretization, only step functions with a constant step length of $0.1$ were considered to solve the OCP~\eqref{eq:RobustFMPCOCP}.
The activation function of the funnel controller is constant $\beta\equiv 10$.
\begin{figure}
\centering
    \begin{minipage}[t]{0.48\textwidth}
    \centering
   \includegraphics[width=\textwidth]{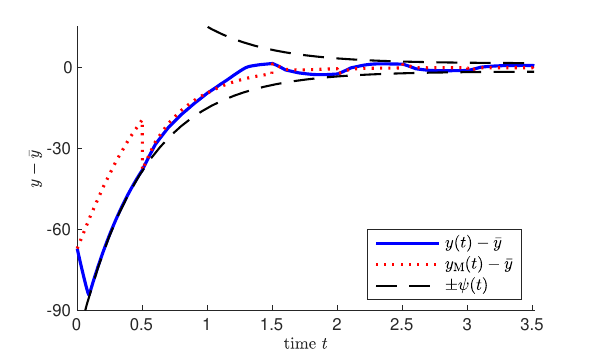}
\end{minipage}
\begin{minipage}[t]{0.48\textwidth}
\centering
    \includegraphics[width=\textwidth]{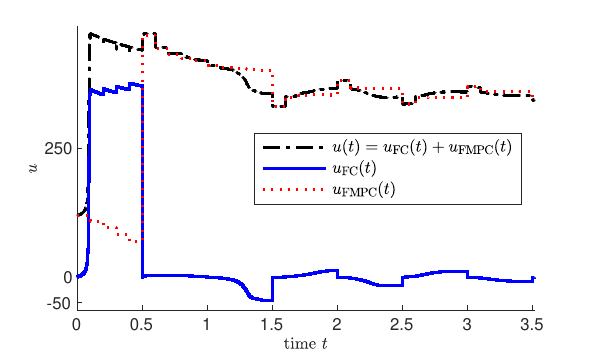}
\end{minipage}
    \caption{Application of learning based robust funnel MPC to the exothermic chemical reactor system.}
    \label{Fig:Chem_reac_learning_fmpc}
\end{figure}

\Cref{Fig:Chem_reac_learning_fmpc} shows the control signals and the system and model output errors, respectively.
It is evident that both $y_{\rm M}-y_{\rf}$
and $y-y_{\rf}$ remain within the predefined funnel boundaries~$\psi$.
Before the first learning step ($t \in [0, 0.5)$) the tracking error $y-y_{\rf}$ and the
predicted error~$y_{\rm M}-y_{\rf}$  diverge due to the poor quality of the initial model.
However, since the tracking error is not close to the funnel boundary,
the funnel controller remains inactive in the beginning and 
only reacts when the tracking error is close to the boundary.
After the first learning step, the general direction of the predicted tracking error is consistent 
with the actual tracking error. The funnel controller still has to compensate for the model inaccuracies
in order to guarantee that the tracking error remains within the boundaries, but with a significantly smaller contribution to the control signal.
After each learning step, the model output jumps to the system output 
due to the newly updated model. 
The control signal $u_{\rm FC}$ is zero after each learning step 
since the system and model output  coincide, 
and it becomes larger afterwards to compensate for the model inaccuracy.
After $t=1.5$ the system output is close to the desired constant reference signal.
Thenceforth, the linear model is adequate to predict the system behavior and the control signal computed by funnel MPC 
is sufficient to achieve the tracking objective.
The funnel controller only has to slightly compensate for model errors. 

In a second simulation, we add an artificial
disturbance $d$ to the differential equation governing the reactor temperature,
i.e., we replace the function $P$ by $\tilde{P}(t,y,\zeta) = b_1 \alpha(y,\zeta) - b_2 y +d(t)$. The disturbance is unknown to the controller and for the simulation we choose the periodic disturbance $d(t)= 25\cdot \sin(15\cdot t)$ and 
leave the controller as it is. The results are depicted in \Cref{Fig:Chem_reac_learning_fmpc_dist}.
\begin{figure}
\centering
    \begin{minipage}[t]{0.48\textwidth}
    \centering
   \includegraphics[width=\textwidth]{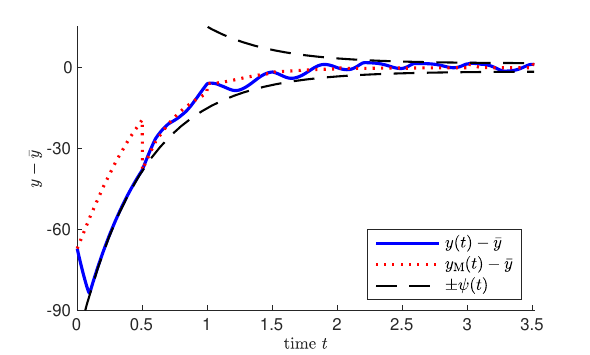}
\end{minipage}
\begin{minipage}[t]{0.48\textwidth}
\centering
    \includegraphics[width=\textwidth]{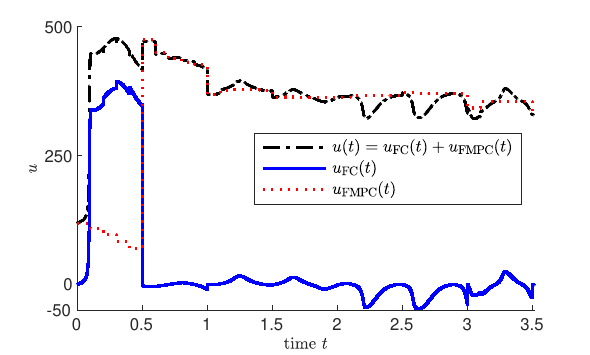}
\end{minipage}
    \caption{Application of learning based robust funnel MPC to the exothermic chemical reactor system in the presence of periodic disturbances.}
    \label{Fig:Chem_reac_learning_fmpc_dist}
\end{figure}
As one can see, the combined controller is able to achieve the control objective 
and the overall system behavior is similar to the one of the undisturbed system.
Before the initial learning step, the funnel controller has to compensate for the inaccuracies of the model. 
Already after the first update of the model, the principal portion of the control signal is generated by the MPC component.
In contrast to the undisturbed case, the funnel controller remains active also when the system output is close to the desired constant reference.
It has to compensate for the unknown periodic disturbances.

We note that this example merely serves to illustrate 
that robust funnel MPC can be combined with $\cM$--feasible learning techniques.
We do not claim that the learning algorithm used is superior to other methods. 

\subsection{Second numerical example: higher relative degree.} \label{Sec:Simulation_RelDeg2}
In this section, we present some promising preliminary results on extending 
the learning-based robust funnel MPC~\Cref{Algo:LearningRFMPC} to a larger system class.
Numerical simulations in~\cite{BergDenn21} suggest that the funnel MPC algorithm is also applicable to systems of higher relative degree, 
meaning systems of the form
\begin{align*}
         y^{(r)}(t) &= P(y(t),\dot{y}(t),\ldots,y^{(r-1)}(t),\zeta(t))
         + \Gamma(y(t),\dot{y}(t),\ldots,y^{(r-1)}(t),\zeta(t)) u(t), \\
        \dot \zeta(t) &= Q(y(t),\dot{y}(t),\ldots,y^{(r-1)}(t),\zeta(t)),
\end{align*}
with $r>1$.
Restricting the class of admissible funnel functions, utilizing an adapted cost function,
and incorporating so-called feasibility constraints in the optimization problem,
feasibility of funnel MPC for this system class was proved in~\cite{BergDenn22}.
It is an open problem whether it is sufficient to utilize the far simpler optimization~\eqref{eq:RobustFMPCOCP} with cost function~\eqref{eq:stageCostFunnelMPC} 
and without the mentioned constraints.
Accordingly, only the case of a relative degree one system was considered for robust funnel MPC in~\cite{BergDenn23}. 
A generalization to systems with higher relative degree has yet to be found.

To illustrate that, nevertheless, the learning-based robust funnel MPC \Cref{Algo:LearningRFMPC} 
shows promising results for this lager system class with fixed relative degree $r>1$,
we consider the example of a mass-on-car system from~\cite{SeifBlaj13} which was 
was also used in~\cite{BergDenn21}.
The mass $m_2$ is mounted on a car with mass $m_1$ via a spring and damper system with spring constant $k>0$ and damper constant $d>0$,
and moves on a  ramp which is inclined by the angle~$\vartheta\in [0,\tfrac{\pi}{2})$.
The  car can be controlled via the force $u$ acting on it.
The situation is depicted in~\Cref{Fig:Mass_on_car}.
\begin{figure}[h!]
    \begin{center}
    \includegraphics[trim=2cm 4cm 5cm 15cm,clip=true,width=5.5cm]{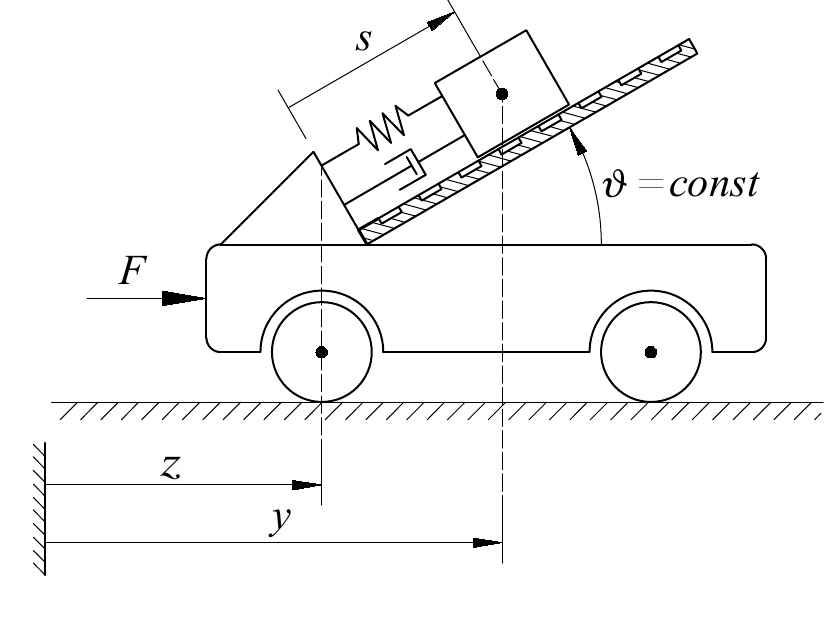}
    \end{center}
    \vspace*{-3mm}
    \caption{Mass-on-car system. The figure is based on the respective figures in~\cite{BergIlch21}, and~\cite{SeifBlaj13}.}
    \label{Fig:Mass_on_car}
\end{figure}

The system can be described by the equations 
\begin{align}\label{eq:ExampleMassOnCarSystem}
    \begin{bmatrix}
        m_1 + m_2& m_2\cos(\vartheta)\\
        m_2 \cos(\vartheta) & m_2
    \end{bmatrix}
    \begin{pmatrix}
        \ddot{z}(t)\\
        \ddot{s}(t)
    \end{pmatrix}
     + 
    \begin{pmatrix}
        0\\
        k s(t) +d\dot{s}(t)
    \end{pmatrix}
     = 
    \begin{pmatrix}
        u(t)\\
        0
    \end{pmatrix}.
\end{align}
The horizontal position of the car is $z(t)$, and  the relative position of the mass on
the ramp at time $t$ is $s(t)$. The output~$y$ of the system is 
\[
    y(t) = z(t) +s(t)\cos(\vartheta),
\]
the horizontal position of the mass on the ramp.
For the system we chose the same parameters $m_1 = 4$, $m_2 = 1$, $k = 2$, $d = 1$, $\vartheta=\pi/4$, and initial values
$z(0) = s(0) = \dot{z}(0) = \dot{s}(0) = 0$ as in \cite{BergDenn21}. Then, as shown there, the system has a relative degree of $r=2$.
The objective is tracking of the reference signal $y_{\rf}(t) = \cos(t)$ such that the tracking error
$y(t)-y_{\rf}(t)$ evolves within the prescribed performance funnel given by the function $\psi\in\cG$ with $\psi(t) = 5\me^{-2t}+0.2$.

For the model-based component of the controller, we solve the optimal control problem~\eqref{eq:RobustFMPCOCP} with stage cost~\eqref{eq:stageCostFunnelMPC}.
The prediction horizon and time shift are selected as $T=0.6$ and $\delta=0.06$, resp.
We restrict the funnel MPC control to $\SNorm{u_{\rm FMPC}}\leq 25$ and choose the parameter $\lambda_u =10^{-2}$ for the stage cost $\ell$.
Due to discretization, only step functions with a constant step length of $0.06$ were considered to solve the OCP~\eqref{eq:RobustFMPCOCP}.
For the model-free component of the controller, we use the funnel controller for systems with relative degree two from~\cite{BergIlch21}
instead of~\eqref{alg:eq:FC}. This component takes the form
\begin{equation*} 
    \begin{aligned}
        w(t)&=\frac{\dot{e}_{\rm M}(t)}{\phi(t)}+\alpha\rbl\tfrac{e_{\rm M}(t)^2}{\phi(t)^2}\rbr\frac{e_{\rm M}(t)}{\phi(t)},\\
        u_{\rm FC}(t)&=-\alpha(w(t)^2)w(t),
    \end{aligned}
\end{equation*}
with $e_{\rm M}(t)=y_{\rm M}(t)-y(t)$ and  $\alpha(s)=\frac{1}{1-s}$ for $s\in[0,1)$.
The controller is applied to the system with a step size $0.6\cdot 10^{-5}$.
Similar to~\cite{berger2019learningbased}, where this problem was studied in the context of model identification during runtime,
for the learning component we assume some knowledge about the structure of the system, but only limited information about the parameters 
and the initial value.
We assume to know $\vartheta\in(0,2\pi]$,  $m_1\in [1,6]$, $m_2\in[0.5,1.5]$, $k\in[1,3]$, $d\in[0.5,1.5]$ and
$z^0:=(x(0),\dot{x}(0),s(0),\dot{s}(0))\in[-2.5,3.5]\times [-2,2]\times [-2.75,3.25]\times [-2,2]$. As initial model all model parameters where chosen equal to $1$ and the initial
state $z^0=(0,1,0,1)$.
To learn the system parameters, we take measurements of the input-output data $((u_{\rm FMPC}+u_{\rm FC})(i\tau),y(i\tau))$
for $\tau =0.006$ and $i\in\N_{0}$ and at every time $jT=100j\tau$ for $j\in\N$ we solve the optimization problem
\begin{align*}
    \mathop
            {\operatorname{minimize}}_{\vartheta,m_1,m_2,k,d,z^0
            }  \quad
            & \sum_{i=0}^{100j}\Norm{\tilde{y}_{\rm M}(i \tau) - y(i \tau)}^2\\
            \text{s.t.} \ \ 
                       z(0) &=  z^0 \text{ and for all }i=1,\ldots, 100j:\\
                       z(i\tau )  &=  \chi(\tau; z((i-1)\tau ),(u_{\rm FMPC}+u_{\rm FC})((i-1)\tau)),\\
                      \tilde{y}_{\rm M}(i\tau )  &=  [1,\cos(\vartheta),0, 0] \, z(i\tau),
\end{align*}
where $z=(x,\dot{x},s,\dot{s})$ denotes the state of the mass on car system~\eqref{eq:ExampleMassOnCarSystem} and
$\chi(\cdot; z((i-1)\tau ),(u_{\rm FMPC}+u_{\rm FC})((i-1)\tau))$ denotes its solution under the initial condition $\chi(0) = z((i-1)\tau )$ and with constant control~$u(\cdot)\equiv(u_{\rm FMPC}+u_{\rm FC})((i-1)\tau)$.
Since only interval $[0,4]$ is considered for the simulation, the entire history of input-output data is considered in the optimization problem instead of a moving horizon approach
as discussed in~\Cref{Sec:Learning-scheme}~\ref{Item:LearnEntireHistory}.

All simulations are performed on the time interval $[0,4]$ with \textsc{Matlab} and the toolkit \textsc{Casadi}
and are depicted in~\Cref{Fig:MassOnCar_learning_fmpc}.
\begin{figure}
\centering
    \begin{minipage}[t]{0.48\textwidth}
    \centering
   \includegraphics[width=\textwidth]{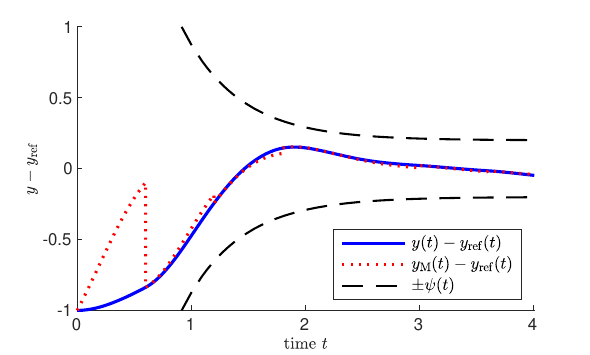}
\end{minipage}
\begin{minipage}[t]{0.48\textwidth}
\centering
    \includegraphics[width=\textwidth]{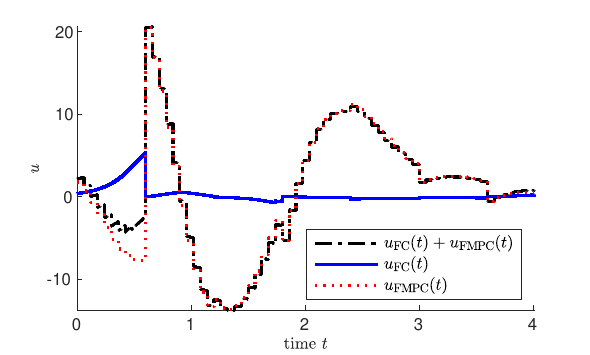}
\end{minipage}
    \caption{Simulation of system~\eqref{eq:ExampleMassOnCarSystem} under learning-based robust funnel MPC~\Cref{Algo:LearningRFMPC}.}
    \label{Fig:MassOnCar_learning_fmpc}
\end{figure}
It is evident that the control scheme is feasible and achieves the control objective.
Both errors~$y_{\rm M}-y_{\rf}$ and~$y-y_{\rf}$ evolve within the funnel boundaries given by~$\psi$. Overall, a similar behavior as in the first simulation (initial divergence, small funnel control signals afterwards, jumps at each learning step, etc.) can be observed.
We like to emphasize that already after the first learning step, the quality of the model is apparently  quite good such that the funnel controller only has to slightly compensate 
 for model errors and the control signal mainly consists of the control~$u_{\rm FMPC}$ generated by the model-based controller component.

\section{Conclusion}

We extended the recently developed robust funnel MPC algorithm~\cite{BergDenn23} by a learning component in order to improve the model-based control component by learning the model from system data.
Based on the characterization of the model class (\Cref{Def:Model}) and corresponding feasible learning structures (\Cref{Def:Learning}), we showed that the interplay between pure feedback control, model predictive control and learning is consistent and successful. 
In particular, the control objective~\eqref{eq:ControlObjective} of output reference tracking with prescribed performance is achieved.
Future research will focus on several particular aspects of the presented approach. 
Questions to be answered are, among other things, what it means to have a \emph{good} learning scheme; which existing techniques (fundamental lemma by Willems et al.~\cite{faulwasser2022behavioral},
Koopman framework~\cite{mauroy2020koopman}, Neural Networks, etc.) can be used to exploit the data; how can knowledge about the system be incorporated into the learning scheme; to name but three aspects.
Of special interest is the question of rigorously proving $\cM$-feasibility for more sophisticated learning algorithms.
Moreover, the extension of the presented results to systems of higher relative degree is subject of current research.
To this end, the results on robust funnel MPC presented in~\cite{BergDenn23} will be extended to higher relative degree. The approach presented here can then be transferred to systems of higher relative degree \textit{mutatis mutandis}.

\bibliographystyle{plain}
\small
\bibliography{\References}

\end{document}